\newtheorem{theo}{Theorem}[section]
\newtheorem*{theo*}{Theorem}
\newtheorem{coro}[theo]{Corollary}
\newtheorem{lemm}[theo]{Lemma}
\newcommand{\N}{\mathbb{N}}
\newcommand{\Z}{\mathbb{Z}}
\newcommand{\R}{\mathbb{R}}
\title[Asymptotic formulae]{Some asymptotic formulae involving Cohen-Ramanujan expansions}
\begin{document}
\keywords{Ramanujan Sum; convolution sums; Cohen-Ramanujan Sum; Cohen-Ramanujan Expansions; asymtotic formulae; mean value; Jordan totient function; Klee's function; divisor function}
\subjclass[2020]{11A25, 11L03, 11N05, 11N37}
\author[A Chandran]{Arya Chandran}
\address{Department of Mathematics, CVV Institute of Science and Technology, Chinmaya Vishwa Vidyapeeth (Deemed to be University),  Anthiyal-Onakkoor Road, Ernakulam, Kerala-686667, India}
\email{aryavinayachandran@gmail.com}
\author[K V Namboothiri]{K Vishnu Namboothiri}
\address{Department of Mathematics, 
Baby John Memorial Government College, Chavara, Kollam – 691583,
Kerala, India\\Department of Collegiate Education, Government of Kerala, India}
\email{kvnamboothiri@gmail.com}

\begin{abstract}
 Cohen-Ramanujan sum, denoted by $c_r^s(n)$, is an exponential sum  similar to the Ramanujan sum $c_r(n):=\sum\limits_{\substack{h=1\\{(h,r)=1}}}^{r}e^{\frac{2\pi i n h}{r}}$. An arithmetical function $f$ is said to admit a Cohen-Ramanujan expansion
 $ f(n):=\sum\limits_{r}\widehat{f}(r)c_r^s(n)$
if the series on the right hand side converges for suitable complex numbers $\widehat{f}(r)$.  Given two arithmetical functions $f$ and $g$ with absolutely convergent Cohen-Ramanujan expansions, we derive an asymptotic formula for the sum $\sum\limits_{\substack{n\leq N}}f(n)g(n+h)$ where $h$ is a fixed non negative integer. We also provide Cohen-Ramanujan expansions for certain functions to illustrate some of the results we prove consequently.
\end{abstract}

 \maketitle
\section{Introduction}
 Srinivasa Ramanujan introduced the exponential sum
\begin{align}
c_r(n)&=\sum\limits_{\substack{{m=1}\\(m,r)=1}}^{r}e^{\frac{2 \pi imn}{r}}\label{ramanujan_sum}
\end{align}
 in \cite{ramanujan1918certain} and used this sum to derive infinite Fourier series like expansions for several arithmetical functions in the form $\sum\limits_{r}a_rc_r(n)$. The exponential sum \eqref{ramanujan_sum} now known as the \emph{Ramanujan sum} has several interesting properties, many of which can be found in \cite{tom1976introduction},  \cite{mccarthy2012introduction}, or \cite{sivaramakrishnan1988classical}. If a Fourier series like expansion in the form $\sum\limits_{r}a_rc_r(n)$ exists for an arithmetical function, the  function is said to possess a \emph{Ramanujan-Fourier series expansion} or simply a \emph{Ramanujan expansion}. Ramanujan expansions were computed for various functions by many other authors, see, for example \cite{hardy1921note}. Various conditions for the existence of Ramanujan expansions appeared in papers like \cite{hardy1921note},  \cite{lucht2010survey} and \cite{murty2013ramanujan}.

Ramanujan sum has been generalized in many directions. In \cite{cohen1949extension}, E. Cohen gave a generalization defining
\begin{align}\label{gen-ram-sum}
c_r^s(n)&:=\sum\limits_{\substack{h=1\\{(h,r^s)_s=1}}}^{r^s}e^{\frac{2\pi i n h}{r^s}},
\end{align}
where for a positive integer $s$, integers $m,n$, not both zero, the \emph{generalized GCD} of $m$ and $n$ denoted by $(m,n)_{s}$ is the largest $l^s$ (where $l\in \N$) dividing both $m$ and $n$. If $(m,n)_s=1$, then $m$ and $n$ are said to be relatively $s$-prime to each other. When $s=1$, generalized GCD becomes the usual GCD and then the sum (\ref{gen-ram-sum}) reduces to the usual Ramanujan sum.
 We will refer to the sum \eqref{gen-ram-sum} by the name \emph{Cohen-Ramanujan sum}. Like the usual Ramanujan sum, for certain specific values of $s$ and $n$, this sum becomes equal to various well known arithmetical functions. For example, $c_r^s(0) = \Phi_s(r^s)=J_s(r)$, where $\Phi_s$ is the Klee's function and $J_s$ is the Jordan totient function. Please see the next section for the definitions of these two functions.  Cohen derived various properties of this sum and proved several results using this generalization in a series of papers \cite{cohen1949extension}, \cite{cohen1955extensionofr}, and \cite{cohen1956extension}.
 
 Another generalization of the Ramanujan sum, and a list of references to some more generalizations can be found in \cite{subba1966new}. Recently, Haukkanen and McCarthy gave another generalization in \cite{haukkanen1991sums}.

 An arithmetical function $f$ is said to admit a \emph{Cohen-Ramanujan expansion}
 \begin{align*}
 f(n):=\sum\limits_{r}\widehat{f}(r)c_r^s(n),
 \end{align*}
if the series on the right hand side converges for suitable complex numbers $\widehat{f}(r)$. A study on the Cohen-Ramanujan expansions of some arithmetical functions were conducted by these authors in \cite{chandran2023ramanujan}. Some  necessary and sufficient conditions for the existence of such expansions were also given there. To the best of our knowledge, such series expansions are not available for any other generalization of the Ramanujan sum as of now.

  In  \cite{hardy1921note},  Hardy derived a Ramanujan expansion  for the function $\Lambda_1(n): = (\phi(n)/n)\Lambda(n)$ in the form
  \begin{align}
  \Lambda_1(n) = \sum\limits_{q = 1}^{\infty} \frac{\mu(q)}{\phi(q)} c_q(n)\label{hardyexp},
  \end{align}
    where $\Lambda(n)$ is the von Mangoldt function defined as 
 \begin{align*} 
  \Lambda(n) := \begin{cases}
                \log p,\text{ if $n = p^k$ , $p$ is prime and $k$ any positive integer}\\
                0, \text{ otherwise}.
               \end{cases}
 \end{align*} and $\mu$ is the M{\"o}bius function.
 
 If $a$ is an arithmetical function with the Ramanujan expansion
 $a(n) = \sum\limits_{r = 1}^{\infty}a_rc_r(n)$,
then the Wiener–Khintchine theorem for an arithmetical function $a$ \cite[Section 3.2]{gopalakrishna2014ramanujanczech} can be stated as \begin{align*}
\lim\limits_{n \longrightarrow \infty }\frac{1}{N}\sum\limits_{n\le q N} a(n) a(n+h)= \sum\limits_{\substack{r=1}}^{\infty}a_r^2 c_r(h)
\end{align*} provided that certain convergence conditions are satisfied.
H. Gadiyar and R. Padma \cite{gadiyar1999ramanujan} showed  that Hardy-Littlewood conjecture on twin primes can be proved if the Wiener–Khintchine theorem can be applied to the Ramanujan expansion of the arithmetical function $\Lambda_1$ provided that the expansion is uniformly and absolutely convergent. They derived the formula
\begin{align*}
\sum\limits_{n \leq N} \Lambda(n) \Lambda(n+h) \sim N \sum\limits_{\substack{r = 1}}^{\infty}\frac{\mu^2(r)}{\phi(r)}c_r(h),
\end{align*} which agrees with the Hardy-Littlewood conjecture. But note that this identity is valid only if the Ramanujan expansion \eqref{hardyexp} derived by Hardy is absolutely convergent, which is not actually the case.   Taking forward these discussions, H. Gadiyar, M. Ram  Murty and R. Padma \cite{gopalakrishna2014ramanujan} derived an asymptotic formula for the sum
$\sum\limits_{n\leq N} f(n)g(n+h),$
where $f$ and $g$ are two arithmetical functions with absolutely convergent Ramanujan expansions.

 In this paper, we use the Cohen-Ramanujan expansion in the context initiated  in \cite{gopalakrishna2014ramanujan} and derive an asymptotic formula for the sum $\sum\limits_{n\leq N} f(n)g(n+h)$,
where $f$ and $g$ are arithmetical functions with absolutely convergent Cohen-Ramanujan expansions satisfying some additional conditions. Therefore, our results extend the results in \cite{gopalakrishna2014ramanujan}. We also provide some examples to demonstrate our results.

We also give a condition for the existence of the Cohen-Ramanujan expansion for $f_h(n) := f(n+h)$ when such an expansion for $f$ exists.

The main results we propose in this paper are the following.

\begin{theo}\label{theo1}
Suppose that $f$ and $g$ are two arithmetical functions with absolutely convergent Cohen-Ramanujan expansions
\begin{center}
 $f(n) = \sum\limits_{\substack{r}}\widehat{f}(r)c_r^{s}(n)$ and  $g(n) = \sum\limits_{\substack{k}}\widehat{g}(k)c_k^{s}(n)$
\end{center}
respectively. Suppose that $\sum\limits_{\substack{r,k}}\vert \widehat{f}(k)\widehat{g}(k) \vert (r^s,k^s)_s \tau_s(r^s) \tau_s(k^s)< \infty$. Then as $N$ tends to infinity, $\sum\limits_{\substack{n \leq N}} f(n)g(n)\sim N \sum\limits_{\substack{r}} \widehat{f}(r)\widehat{g}(r) \Phi_s(r^s)$.
\end{theo}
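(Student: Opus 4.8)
The plan is to substitute both Cohen-Ramanujan expansions into the sum and reduce the problem to the mean value of a product of two Cohen-Ramanujan sums. First I would write
\[
\sum_{n\le N}f(n)g(n)=\sum_{r,k}\widehat f(r)\widehat g(k)\sum_{n\le N}c_r^{s}(n)c_k^{s}(n),
\]
the interchange of the infinite double sum over $r,k$ with the finite sum over $n$ being legitimate because both expansions converge absolutely. Everything then hinges on understanding $\frac1N\sum_{n\le N}c_r^{s}(n)c_k^{s}(n)$, which plays the role of an orthogonality relation for Cohen-Ramanujan sums.

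To obtain it I would use Cohen's divisor representation $c_r^{s}(n)=\sum_{d\mid r,\ d^{s}\mid n}d^{s}\mu(r/d)$. Inserting it for both factors and counting the integers $n\le N$ divisible by a prescribed $s$-th power gives
\[
\frac1N\sum_{n\le N}c_r^{s}(n)c_k^{s}(n)=\sum_{d\mid r}\sum_{e\mid k}d^{s}e^{s}\mu(r/d)\mu(k/e)\,\frac1N\Big\lfloor\frac{N}{\operatorname{lcm}(d,e)^{s}}\Big\rfloor .
\]
Using $\frac1N\lfloor N/L\rfloor\le 1/L$ together with $d^{s}e^{s}/\operatorname{lcm}(d,e)^{s}=\gcd(d,e)^{s}$, the left side is bounded, uniformly in $N$, by $\sum_{d\mid r}\sum_{e\mid k}\gcd(d,e)^{s}\le (r^{s},k^{s})_{s}\,\tau_s(r^{s})\tau_s(k^{s})$, which is exactly the weight occurring in the hypothesis. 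Letting $N\to\infty$ termwise produces the pointwise limit $M(r,k):=\sum_{d\mid r}\sum_{e\mid k}\gcd(d,e)^{s}\mu(r/d)\mu(k/e)$, a multiplicative function of the pair $(r,k)$ that I would evaluate at prime powers to get $M(r,r)=J_s(r)=\Phi_s(r^{s})$ and $M(r,k)=0$ for $r\ne k$.

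Finally, the summability hypothesis $\sum_{r,k}\lvert\widehat f(r)\widehat g(k)\rvert (r^{s},k^{s})_{s}\tau_s(r^{s})\tau_s(k^{s})<\infty$ furnishes the dominating series needed to move the limit $N\to\infty$ inside the double sum over $r,k$ by dominated convergence. Only the diagonal $r=k$ survives, giving
\[
\lim_{N\to\infty}\frac1N\sum_{n\le N}f(n)g(n)=\sum_{r}\widehat f(r)\widehat g(r)\,\Phi_s(r^{s}),
\]
which is the asserted asymptotic. The main obstacle I anticipate is twofold: proving the uniform bound in precisely the shape $(r^{s},k^{s})_{s}\tau_s(r^{s})\tau_s(k^{s})$ so that the stated hypothesis applies verbatim, and verifying the off-diagonal vanishing $M(r,k)=0$; once these are secured, the interchange of summations and the passage to the limit are routine applications of absolute and dominated convergence.
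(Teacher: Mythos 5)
Your proof is correct, but it takes a genuinely different route from the paper's. The paper substitutes the expansions exactly as you do, but then splits the double sum over $r,k$ at a parameter $U$ (into $rk\le U$ and $rk>U$): the range $rk\le U$ is handled by an exponential-sum lemma (proved via Korobov's bound on incomplete geometric sums) asserting $\sum_{n\le N}c_r^s(n)c_k^s(n+h)=\delta_{r,k}\,N\,c_r^s(h)+O(r^sk^s\log r^sk^s)$, specialized to $h=0$, while the tail $rk>U$ is controlled by the same uniform bound $N(r^s,k^s)_s\tau_s(r^s)\tau_s(k^s)$ that you derive from the divisor representation; letting $U\to\infty$ suitably with $N$ gives the result. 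You avoid the exponential-sum lemma altogether: the divisor representation yields both the uniform domination and the pointwise limit $\frac1N\sum_{n\le N}c_r^s(n)c_k^s(n)\to\delta_{r,k}\Phi_s(r^s)$, and dominated convergence (Tannery) finishes. Your off-diagonal vanishing of $M(r,k)$, which you propose to verify by multiplicativity at prime powers, does check out, and it also follows immediately from Cohen's orthogonality relation quoted in Section 2 of the paper, since $c_r^s(n)c_k^s(n)$ is periodic in $n$ modulo $[r,k]^s$. What each approach buys: yours is more elementary and, at one point, tighter --- the paper's tail estimate is only stated as $O(N)$, whereas what the asymptotic actually requires (and what your dominated-convergence formulation delivers automatically) is that the tail can be made smaller than $\varepsilon N$ by taking $U$ large; on the other hand, the paper's method produces an explicit error term $O(U^s\log U^s)$ and, more importantly, is the argument that carries over to the shifted sum $\sum_{n\le N}f(n)g(n+h)$ of Theorem \ref{con-sum}, where the mean value of $c_r^s(n)c_k^s(n+h)$ cannot be read off the divisor representation so easily and the exponential-sum analysis becomes essential.
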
 

\begin{theo}\label{con-sum}
Suppose that $f$ and $g$ are two arithmetical functions with absolutely convergent Cohen-Ramanujan expansions
\begin{center}

$f(n) = \sum\limits_{\substack{r}}\widehat{f}(r)c_r^{s}(n)$ and $g(n) = \sum\limits_{\substack{k}}\widehat{g}(k)c_k^{s}(n)$
\end{center}
respectively. Suppose that $\sum\limits_{\substack{r,k}}\vert \widehat{f}(r)\vert \vert\widehat{g}(k) \vert (r^sk^s)^{\frac{1}{2}} \tau_s(r^s) \tau_s(k^s)< \infty$. Then as $N$ tends to infinity, $\sum\limits_{\substack{n \leq N}} f(n)g(n+h)\sim N \sum\limits_{\substack{r}} \widehat{f}(r)\widehat{g}(r) c_r^s(h)$.

\end{theo}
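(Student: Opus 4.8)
The plan is to substitute the two expansions into the left-hand side and reduce everything to a single correlation of Cohen-Ramanujan sums. Writing $f(n)=\sum_r\widehat f(r)c_r^s(n)$ and $g(n+h)=\sum_k\widehat g(k)c_k^s(n+h)$ and interchanging the summations — an interchange ultimately licensed by the absolute convergence built into the hypothesis — I would arrive at
\[
\sum_{n\le N}f(n)g(n+h)=\sum_{r,k}\widehat f(r)\widehat g(k)\,S(r,k,N),\qquad S(r,k,N):=\sum_{n\le N}c_r^s(n)c_k^s(n+h).
\]
Everything then hinges on a sufficiently precise evaluation of the inner correlation $S(r,k,N)$ together with control of the resulting error uniformly enough to sum over all $r,k$.

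Next I would compute $S(r,k,N)$ using the divisor representation $c_r^s(n)=\sum_{d\mid r,\,d^s\mid n}d^s\mu(r/d)$ (whose $n=0$ case recovers $c_r^s(0)=J_s(r)$). Expanding both factors and exchanging the order of summation gives
\[
S(r,k,N)=\sum_{d\mid r}\sum_{e\mid k}d^s e^s\mu(r/d)\mu(k/e)\,\#\{\,n\le N:\ d^s\mid n,\ e^s\mid n+h\,\}.
\]
By the Chinese Remainder Theorem the inner count equals $\tfrac{N}{\operatorname{lcm}(d^s,e^s)}+O(1)$ when $\gcd(d^s,e^s)\mid h$ and is $0$ otherwise. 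Using $\tfrac{d^s e^s}{\operatorname{lcm}(d^s,e^s)}=\gcd(d^s,e^s)$, the main term is $N\,\Theta(r,k,h)$ with $\Theta(r,k,h)=\sum_{d\mid r,\,e\mid k,\,\gcd(d^s,e^s)\mid h}\gcd(d^s,e^s)\mu(r/d)\mu(k/e)$. The crucial algebraic step is the orthogonality relation $\Theta(r,k,h)=0$ for $r\ne k$ and $\Theta(r,r,h)=c_r^s(h)$. The vanishing for $r\ne k$ is transparent from the exponential-sum form of $S(r,k,N)$: its main term collects the frequencies $\tfrac{a}{r^s}+\tfrac{b}{k^s}\in\Z$, and comparing $p$-adic valuations (using $(a,r^s)_s=(b,k^s)_s=1$) shows such resonances occur only when $r=k$; the diagonal value is then the defining sum of $c_r^s(h)$. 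This already identifies the claimed main term $N\sum_r\widehat f(r)\widehat g(r)c_r^s(h)$.

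The main obstacle is the passage to the limit: I must justify interchanging $\lim_{N\to\infty}\tfrac1N$ with the infinite sum over $(r,k)$, i.e.\ show that the accumulated errors contribute $o(N)$. Here the shape of the hypothesis is decisive. The inequality $\gcd(d^s,e^s)\le(d^s e^s)^{1/2}$ bounds the main-term coefficient by $\bigl(\sum_{d\mid r}d^{s/2}\bigr)\bigl(\sum_{e\mid k}e^{s/2}\bigr)\le(r^sk^s)^{1/2}\tau_s(r^s)\tau_s(k^s)$, which is precisely the weight assumed summable against $|\widehat f(r)\widehat g(k)|$; this both guarantees absolute convergence of the main series and supplies the majorant for a dominated-convergence argument. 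The genuinely delicate point is the $O(1)$ edge effect in each Chinese-Remainder count: multiplied by $d^se^s$ and summed over $d\mid r$, $e\mid k$, it is a priori of size $\sigma_s(r)\sigma_s(k)$, too large to absorb term by term. I would handle this by truncating at a parameter $R$, letting $N\to\infty$ first so that the finitely many pairs $r,k\le R$ reproduce $\sum_{r\le R}\widehat f(r)\widehat g(r)c_r^s(h)$ exactly, and then estimating the tail $\max(r,k)>R$ uniformly in $N$ by the convergent majorant before sending $R\to\infty$. Verifying that this tail estimate really is uniform in $N$ — equivalently, that the edge effects are negligible after division by $N$ — is the step I expect to demand the most care.
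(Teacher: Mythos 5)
Your reduction and your identification of the main term are sound, and they parallel the paper's own argument: the interchange of summations, and the observation that a resonance $\frac{a}{r^s}+\frac{b}{k^s}\in\Z$ with $(a,r^s)_s=(b,k^s)_s=1$ forces $r=k$ (this valuation comparison is exactly Case 1 in the paper's Lemma \ref{lem2}), so that the per-pair average is $\delta_{r,k}c_r^s(h)$. The genuine gap is in the tail. You correctly note that your CRT evaluation carries an error of size $\sigma_s(r)\sigma_s(k)\asymp r^sk^s$ per pair, too large to sum against the hypothesis weight, and you propose to fix this by truncating at $R$, letting $N\to\infty$ on the finite block, and then "estimating the tail $\max(r,k)>R$ uniformly in $N$ by the convergent majorant." But no such majorant has been produced: the only bound you derive, via $\gcd(d^s,e^s)\le(d^se^s)^{1/2}$, controls the main-term coefficient $\Theta(r,k,h)$ alone (which is in any case just $\delta_{r,k}c_r^s(h)$, so the main series converges under far weaker hypotheses), not the full correlation $S(r,k,N)$ with its edge effects — which are precisely the terms you identified as problematic. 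At its decisive point the argument therefore assumes the uniform tail estimate whose absence it has just acknowledged; this is a missing ingredient, not a deferred detail.

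What closes the gap — and what the paper does in its Lemma \ref{lem3} — is a bound of a different kind. By Cauchy--Schwarz,
\begin{align*}
\Bigl\vert \sum_{n\le N}c_r^s(n)c_k^s(n+h)\Bigr\vert^2
\le \Bigl(\sum_{n\le N}c_r^s(n)^2\Bigr)\Bigl(\sum_{n\le N}c_k^s(n+h)^2\Bigr)
\le N r^s\tau_s(r^s)^2\,(N+h)\,k^s\tau_s(k^s)^2,
\end{align*}
where the two diagonal second moments follow from the same divisor computation you already carried out (on the diagonal the lcm collapses and the edge effects are harmless, as in the paper's Lemma \ref{lem1}). This yields $\vert S(r,k,N)\vert\le N^{1/2}(N+h)^{1/2}(r^sk^s)^{1/2}\tau_s(r^s)\tau_s(k^s)\le 2N(r^sk^s)^{1/2}\tau_s(r^s)\tau_s(k^s)$ for $N\ge h$, uniformly in $N$. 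Note that this, and not the inequality $\gcd(d^s,e^s)\le(d^se^s)^{1/2}$, is the real reason the weight $(r^sk^s)^{1/2}\tau_s(r^s)\tau_s(k^s)$ appears in the hypothesis. With this one lemma added, your truncation scheme (or equivalently the paper's cutoff $rk\le U$, which uses the sharper error $O(r^sk^s\log r^sk^s)$ of Lemma \ref{lem2} on the finite block and Cauchy--Schwarz on the tail) goes through, and the proof is essentially the paper's.
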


\begin{theo}\label{th:fhexpansion}
Suppose that $f$ has an absolutely convergent Cohen-Ramanujan expansion with coefficients $\widehat{f}(q)$ satisfying $ \sum\limits_{\substack{q = 1}}^{\infty} \vert \widehat{f}(q) \vert \tau(q) < \infty$. Then $f_h$ has an absolutely convergent Cohen-Ramanujan expansion with coefficients $\dfrac{\widehat{f}(r)c_r^s(h)}{\Phi_s(r^s)}$.
\end{theo}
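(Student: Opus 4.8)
The plan is to obtain the coefficients of $f_h$ by the orthogonality (mean-value) extraction that underpins the Wiener--Khintchine framework used throughout the paper. The $r$-th Cohen-Ramanujan coefficient of a function $F$ should be recovered from the mean value $\frac{1}{\Phi_s(r^s)}\lim_{N\to\infty}\frac1N\sum_{n\le N}F(n)c_r^s(n)$, the normalisation being forced by $\frac1N\sum_{n\le N}c_k^s(n)c_r^s(n)\to\delta_{k,r}\Phi_s(r^s)$, which is the $h=0$ case of the correlation supplied by Theorem \ref{con-sum} together with $c_r^s(0)=\Phi_s(r^s)$. I would therefore apply this to $F=f_h$, starting from the absolutely convergent representation $f_h(n)=f(n+h)=\sum_q\widehat f(q)c_q^s(n+h)$ inherited from the expansion of $f$.

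The next step is to substitute this representation into $\frac1N\sum_{n\le N}f_h(n)c_r^s(n)$ and interchange the sum over $q$ with the average over $n$, reducing everything to the shifted correlation
\[
\lim_{N\to\infty}\frac1N\sum_{n\le N}c_q^s(n+h)\,c_r^s(n)=\delta_{q,r}\,c_r^s(h).
\]
This correlation is precisely Theorem \ref{con-sum} applied to the pair $f=c_r^s$, $g=c_q^s$, whose coefficient sequences are Kronecker deltas, so that the hypothesis of that theorem reduces to a single finite term and holds trivially. Granting the interchange, the sum over $q$ collapses to the single index $q=r$, giving $\frac1N\sum_{n\le N}f_h(n)c_r^s(n)\to\widehat f(r)c_r^s(h)$, and dividing by $\Phi_s(r^s)$ yields the claimed coefficient $\widehat{f_h}(r)=\widehat f(r)c_r^s(h)/\Phi_s(r^s)$.

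The hard part will be justifying the interchange of the infinite sum over $q$ with the limit $N\to\infty$, and this is exactly where the hypothesis $\sum_q|\widehat f(q)|\tau(q)<\infty$ enters. Using the explicit divisor representation $c_q^s(m)=\sum_{d\mid q,\,d^s\mid m}\mu(q/d)d^s$, which has at most $\tau(q)$ nonzero terms, I would prove the elementary bound $\sum_{n\le N}\lvert c_q^s(n+h)\rvert\le N\tau(q)+\sigma_s(q)$, whence for each fixed $r$
\[
\limsup_{N\to\infty}\frac1N\sum_{n\le N}\bigl\lvert c_q^s(n+h)\,c_r^s(n)\bigr\rvert\ll_r\tau(q).
\]
Because $\sum_q|\widehat f(q)|\tau(q)<\infty$, the tail $\sum_{q>Q}$ can then be made uniformly small, which legitimises passing the limit inside the sum; the $\sigma_s(q)$ term is harmless since it is divided by $N$ and vanishes in the limit before the tail estimate is invoked.

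Finally I would verify that the resulting expansion is absolutely convergent. Applying the triangle inequality to the $\Phi_s(r^s)$ unimodular summands defining $c_r^s(h)$ gives $\lvert c_r^s(h)\rvert\le\Phi_s(r^s)$, so the new coefficients satisfy $\lvert\widehat{f_h}(r)\rvert=\lvert\widehat f(r)c_r^s(h)\rvert/\Phi_s(r^s)\le\lvert\widehat f(r)\rvert$. Hence $\sum_r\lvert\widehat{f_h}(r)\rvert\,\lvert c_r^s(n)\rvert\le\sum_r\lvert\widehat f(r)\rvert\,\lvert c_r^s(n)\rvert<\infty$ by the assumed absolute convergence of the expansion of $f$, which completes the argument.
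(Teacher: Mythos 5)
Your overall strategy is the same as the paper's: recover the coefficients of $f_h$ from the mean value $M(f_h c_r^s)/\Phi_s(r^s)$ via orthogonality, substitute the absolutely convergent expansion of $f$, collapse the diagonal using the shifted correlation $\frac1N\sum_{n\le N}c_q^s(n+h)c_r^s(n)\to\delta_{q,r}c_r^s(h)$, and control the tail over $q$ by a bound of the shape $N\Phi_s(r^s)\tau(q)$ combined with $\sum_q|\widehat f(q)|\tau(q)<\infty$. These are, respectively, the paper's orthogonality computation, its Lemma \ref{lem2}, and its Lemma \ref{lem4}. One citation-level quibble: for the correlation you should invoke Lemma \ref{lem2} directly (divide by $N$ and let $N\to\infty$) rather than Theorem \ref{con-sum} with Kronecker-delta coefficients, because when $q\neq r$ (or when $c_r^s(h)=0$) the conclusion of Theorem \ref{con-sum} reads ``$\sim N\cdot 0$'', which is not a meaningful asymptotic relation; Lemma \ref{lem2} gives the precise statement $\delta_{q,r}Nc_r^s(h)+O(r^sq^s\log(r^sq^s))$ that you actually need, and Theorem \ref{con-sum} is itself deduced from it.

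There is, however, one genuine gap: your justification of the interchange of $\lim_{N\to\infty}$ with $\sum_q$ does not work as written. From $\sum_{n\le N}|c_q^s(n+h)|\le N\tau(q)+\sigma_s(q)$ you obtain only a \emph{per-$q$} limsup bound $\limsup_N\frac1N\sum_{n\le N}|c_q^s(n+h)c_r^s(n)|\ll_r\tau(q)$; the value of $N$ beyond which $\sigma_s(q)/N$ is negligible depends on $q$ (one needs $N\gg\sigma_s(q)\ge q^s$), so no single $N$ serves the whole tail $q>Q$. Worse, for a \emph{fixed} $N$ your tail bound is $\sum_{q>Q}|\widehat f(q)|\bigl(\tau(q)+\sigma_s(q)/N\bigr)$, which may be $+\infty$, since the hypothesis controls $\sum_q|\widehat f(q)|\tau(q)$ but says nothing about $\sum_q|\widehat f(q)|\sigma_s(q)$. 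Thus ``the $\sigma_s(q)$ term vanishes in the limit before the tail estimate is invoked'' reverses the order of quantifiers that the argument requires. The repair is short and is exactly the paper's Lemma \ref{lem4}: in the counting step use $\#\{n\le N:d^s\mid n+h\}\le (N+h)/d^s$ (divisors with $d^s>N+h$ contribute nothing), giving $\sum_{n\le N}|c_q^s(n+h)|\le (N+h)\tau(q)\le 2N\tau(q)$ for all $N\ge h$, hence $\bigl|\frac1N\sum_{n\le N}c_q^s(n+h)c_r^s(n)\bigr|\le 2\Phi_s(r^s)\tau(q)$ \emph{uniformly} in $N\ge h$. With this uniform domination the tail is at most $2\Phi_s(r^s)\sum_{q>Q}|\widehat f(q)|\tau(q)$ for every $N\ge h$, and your splitting argument goes through. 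The remaining steps of your proposal (the normalisation by $\Phi_s(r^s)$, the bound $|c_r^s(h)|\le\Phi_s(r^s)$, and the absolute convergence of the resulting expansion) are correct and consistent with the paper.
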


\section{Notations and basic results}
 Most of the notations, functions and identities we use in this paper are standard and their definitions can be found in \cite{tom1976introduction} or \cite{mccarthy2012introduction}. However, for the sake of completeness, we include the most important ones below.
As usual, $[m, n]$ denotes the \emph{lcm} of integers $m,n$. As in the case of the well known identity $(m,n)[m,n]=mn$, it is easy to verify that $ (m^s,n^s)_s [m^s,n^s]=m^sn^s$ where $(m,n)_s$ denotes the \emph{generalized GCD} of $m$ and $n$. An integer $m$ is said to be an \emph{$s^{th}$ power free integer} if for no positive integer $k>1$ exists such that  $k^s|m$.

If $(m, n)_s = 1$, then we say that $m$ and $n$ are \emph{relatively $s$-prime} to each other. The subset $N$ of a complete residue system $M (\text{mod }n^s)$ consisting of all elements of $M$ that are relatively $s$-prime to $n^s$ is called an $s$-reduced residue system (mod $n$).

For positive integers $s$ and $n$, the \emph{Jordan totient function} $J_s(n)$ is defined  to give the number of ordered sets of $s$ elements from a complete residue system (mod $n$) such that the greatest common divisor of each set is prime to $n$. The \emph{Klee's function} $\Phi_{s}$  is defined
   	to give the cardinality of the  set $\{m\in\N : 1\leq m\leq n, (m,n)_s=1\}$. 
  Note that $\Phi_1 = \varphi$, the Euler totient function. It is known that \cite[Section V.3]{sivaramakrishnan1988classical}  $n^s =  \sum\limits_{d|n}J_s(d)$ and $J_s(n) = \Phi_s(n^s)$. Hence
  \begin{align}\label{phi-reltn}
  n^s &=  \sum\limits_{d|n}\Phi_s(d^s) =\sum\limits_{d^s|n^s}\Phi_s(d^s).
  \end{align}
 By $\tau_{s}(n)$, where $s,n\in \N$, we mean the number of positive integers $l^s$ dividing $n$  where $l\in \N$. $\zeta(s)$ denotes the Riemann zeta function which is defined by the Equation $\zeta(s) = \sum\limits_{\substack{n=1}}^{\infty} \frac{1}{n^s}$ for $\Re(s) >1$.

  Let $x$ be a real number. We denote the distance from $x$ to the nearest integer by $\Vert x \Vert$. For $x \in \R$,  $\llbracket x\rrbracket$ will be used to denote the largest integer $n$ such that $n \leq x$.

The next theorem is essential in estalishing the  main results that we prove in this paper.

\begin{theo}\label{l1}\cite[Theorem 3.2]{tom1976introduction}
If $x \geq 1$ we have, $\sum\limits_{\substack{n \leq x}}\frac{1}{n} = \log x+C+O(\frac{1}{x})$, where $C$ is the Euler constant.
\end{theo}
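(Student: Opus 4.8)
The plan is to prove this classical estimate by applying Euler's summation formula to the smooth function $f(t) = 1/t$ on the interval $[1,x]$; the formula itself is available in the same reference \cite{tom1976introduction}. Writing $\{t\} := t - \llbracket t \rrbracket$ for the fractional part and using $f'(t) = -1/t^2$, Euler's summation formula gives
\[
\sum_{1 < n \leq x} \frac{1}{n} = \int_1^x \frac{dt}{t} - \int_1^x \frac{\{t\}}{t^2}\,dt - \frac{\{x\}}{x},
\]
since the boundary contribution at the integer endpoint $t=1$ vanishes. Adding the $n=1$ term and evaluating $\int_1^x dt/t = \log x$ isolates the leading logarithm, so the problem reduces to analysing the fractional-part integral together with the single boundary term $\{x\}/x$.

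Next I would split the integral as $\int_1^x \{t\}t^{-2}\,dt = \int_1^\infty \{t\}t^{-2}\,dt - \int_x^\infty \{t\}t^{-2}\,dt$. The key observation is that $0 \leq \{t\} < 1$, so the integrand is dominated by $t^{-2}$; this makes $\int_1^\infty \{t\}t^{-2}\,dt$ absolutely convergent to a finite value. Collecting the genuinely constant contributions, namely the $n=1$ term and this convergent integral, defines a number $C$ that is manifestly independent of $x$ (and is in fact the Euler–Mascheroni constant). I would then bound the two remaining $x$-dependent pieces: the tail satisfies $\int_x^\infty \{t\}t^{-2}\,dt \leq \int_x^\infty t^{-2}\,dt = 1/x$, and the boundary term obeys $\{x\}/x < 1/x$, so both are $O(1/x)$. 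Assembling these yields $\sum_{n \leq x} 1/n = \log x + C + O(1/x)$.

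The argument is essentially routine, and the only point requiring genuine care is verifying that the improper integral over $[1,\infty)$ converges, so that $C$ is a well-defined constant rather than something that secretly varies with $x$; this hinges entirely on the $O(t^{-2})$ decay of the integrand, after which the splitting and the tail estimate are immediate. A minor bookkeeping subtlety is that $x$ need not be an integer, so one must retain the boundary term $\{x\}/x$ rather than discarding it; since this term is itself $O(1/x)$, it is harmless and is simply absorbed into the error.
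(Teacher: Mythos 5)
Your proposal is correct, and it coincides with the standard argument in the very source the paper cites for this statement: the paper gives no proof of its own (Theorem \ref{l1} is quoted from Apostol, Theorem 3.2), and Apostol's proof is precisely your application of Euler's summation formula to $f(t) = 1/t$, with the constant $C = 1 - \int_1^\infty \{t\}t^{-2}\,dt$ emerging from the convergent integral and the tail plus the boundary term $\{x\}/x$ absorbed into the $O(1/x)$ error. Your sign bookkeeping and the handling of non-integer $x$ are both accurate, so nothing further is needed.
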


It is easy to verify the next result.
\begin{lemm} \label{l3}
For $d,e,h, n, N$ positie integers, $\sum\limits_{\substack{n \leq N \\ d \mid n \\e \mid n}}1=\left\llbracket\frac{N}{[d,e]}\right\rrbracket$
and $\sum\limits_{\substack{n \leq N \\ d \mid n+h \\e \mid n+h}}1\leq \left\llbracket \frac{N+ h}{[d,e]}\right\rrbracket$.
\end{lemm}
 
For an arithmetical function $f$, its \emph{mean value} is defined by $M(f) := \lim\limits_{\substack{x\rightarrow \infty}}\frac{1}{x} \sum\limits_{\substack{{n \leq x}}} f(n)$, when the limit exists.  The existence of mean value is one of the sufficient conditions for the existence of Ramanujan expansions for arithmetical functions \cite{carmichael1932expansions}.

Cohen-Ramanujan sum (\ref{gen-ram-sum}) has the useful representation
$c_r^s(n)=\sum\limits_{\substack{d \mid r\\d^s\mid n}} \mu(\frac{r}{d}) d^s$ \cite{cohen1949extension}. Let $k,s \in \N$. The \emph{generalized sum of divisors function} \cite{chandran2023ramanujan} $\sigma_{k,s}(n)$ is defined  by $\sigma_{k,s}(n):=\sum\limits_{\substack{d^s \mid n\\d \in \N}}(d^s)^k$. For fixed $s, n\in\mathbb{N}$, $c_r^{s}(n)$ is bounded since  $\vert c_r^{s}(n)\vert  \leq \sum\limits_{\substack{d\mid r\\d^s\mid n}}d^s \leq \sigma_{1,s}(n).$

If $d \mid r$ and $t\mid r$, then by the orthogonality relation \cite{cohen1955extensionofr}, we have
\begin{align*}
 \frac{1}{r^s}\sum\limits_{\substack{m=1}}^{r^s}c_d^s(m)c_t^s(m) = \begin{cases}
\Phi_s(d^s), \text{ if } d =t\\0, \text{ otherwise.} \end{cases}
\end{align*}
Now we proceed to give the proofs of the theorems that we have stated in the introduction.
 
 \section{Proofs of the Main Results}
We start this section proving three inequalities involving products of Cohen-Ramanujan sums essential in the proof of our main results.
\begin{lemm}\label{lem1}
$\sum\limits_{\substack{n \leq N}} c_r^s(n)c_k^s(n) \leq N \tau_s(r^s)\tau_s(k^s)(r^s,k^s)_s$.
\end{lemm}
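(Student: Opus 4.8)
The plan is to expand each Cohen-Ramanujan sum using the divisor representation $c_r^s(n)=\sum_{d\mid r,\,d^s\mid n}\mu(r/d)d^s$ recorded in Section 2, multiply the two expansions together, and interchange the order of summation so that the sum over $n\leq N$ collapses into an inner counting sum over those $n$ that are divisible by both $d^s$ and $e^s$. Concretely, I would write
\begin{align*}
\sum_{n\leq N}c_r^s(n)c_k^s(n)=\sum_{d\mid r}\sum_{e\mid k}\mu\!\left(\frac{r}{d}\right)\mu\!\left(\frac{k}{e}\right)d^se^s\sum_{\substack{n\leq N\\ d^s\mid n,\ e^s\mid n}}1,
\end{align*}
and then pass to absolute values. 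This is harmless for the stated one-sided bound, since the left-hand side (a real number) is always at most its own absolute value.

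Next I would control the inner counting sum. By Lemma~\ref{l3}, applied with the two moduli $d^s$ and $e^s$, this sum equals $\llbracket N/[d^s,e^s]\rrbracket$, which is at most $N/[d^s,e^s]$. The key algebraic input is the identity $(d^s,e^s)_s[d^s,e^s]=d^se^s$ noted in Section~2, which rearranges to $d^se^s/[d^s,e^s]=(d^s,e^s)_s$. Hence each term $d^se^s\cdot\llbracket N/[d^s,e^s]\rrbracket$ is bounded by $N(d^s,e^s)_s$.

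The final step is to make this bound uniform over the divisor pairs and then count them. Since $d\mid r$ forces $d^s\mid r^s$, and likewise $e^s\mid k^s$, any $l^s$ dividing both $d^s$ and $e^s$ also divides both $r^s$ and $k^s$; therefore $(d^s,e^s)_s\leq(r^s,k^s)_s$. Using $\vert\mu\vert\leq 1$ and replacing each $(d^s,e^s)_s$ by $(r^s,k^s)_s$, the double sum is at most $N(r^s,k^s)_s$ times the number of admissible pairs $(d,e)$. That count is exactly $\tau(r)\tau(k)$; and since $l^s\mid r^s\iff l\mid r$, we have $\tau(r)=\tau_s(r^s)$ and $\tau(k)=\tau_s(k^s)$, which yields the claimed bound $N\tau_s(r^s)\tau_s(k^s)(r^s,k^s)_s$.

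I do not expect a genuine obstacle here, as the argument is a direct computation. The only points requiring a little care are the passage from the lcm to the generalized gcd through the identity $(d^s,e^s)_s[d^s,e^s]=d^se^s$, and the observation that the generalized divisor count $\tau_s(r^s)$ coincides with the ordinary $\tau(r)$, so that exactly the stated factors emerge.
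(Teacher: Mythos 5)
Your proposal is correct, and it follows the paper's proof up to the point $N\sum_{d\mid r,\,e\mid k}(d^s,e^s)_s$: same divisor expansion, same interchange of summation, same use of Lemma~\ref{l3} and of the identity $(d^s,e^s)_s[d^s,e^s]=d^se^s$. Where you diverge is the concluding step. The paper expands each $(d^s,e^s)_s$ via Equation~\eqref{phi-reltn} as $\sum_{\delta^s\mid(d^s,e^s)_s}\Phi_s(\delta^s)$, reorganizes the resulting triple sum so that each $\delta$ appears with multiplicity $\tau_s(r^s/\delta^s)\tau_s(k^s/\delta^s)$, bounds those multiplicities by $\tau_s(r^s)\tau_s(k^s)$, and finally resums $\sum_{\delta^s\mid(r^s,k^s)_s}\Phi_s(\delta^s)=(r^s,k^s)_s$, again by \eqref{phi-reltn}. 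You instead bound each term uniformly, $(d^s,e^s)_s\leq(r^s,k^s)_s$ (valid since any $l^s$ dividing $d^s$ and $e^s$ divides $r^s$ and $k^s$), and count the divisor pairs, using $\tau(r)=\tau_s(r^s)$. Your route is more elementary --- it avoids Klee's function entirely --- and reaches the identical bound; the paper's intermediate expression $N\sum_{\delta^s\mid(r^s,k^s)_s}\Phi_s(\delta^s)\tau_s(r^s/\delta^s)\tau_s(k^s/\delta^s)$ is sharper before its final relaxation, but that extra precision is discarded anyway. A minor point in your favor: by passing to absolute values before dropping the M\"obius factors you avoid the paper's slightly careless termwise inequalities involving signed $\mu$ terms.
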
 
\begin{proof}
\begin{align*}
\sum\limits_{\substack{n \leq N}} c_r^s(n)c_k^s(n) &
= \sum\limits_{\substack{n \leq N}} \sum\limits_{\substack{d \mid r\\d^s\mid n}}\mu\left(\frac{r}{d}\right)d^s \sum\limits_{\substack{e \mid k\\e^s\mid n}}\mu\left(\frac{k}{e}\right) e^s
\\&=\sum\limits_{\substack{d \mid r\\e\mid k}}\mu\left(\frac{r}{d}\right)\mu\left(\frac{k}{e}\right) d^s e^s \sum\limits_{\substack{n \leq N\\d^s \mid k\\e^s\mid n}}1.
\end{align*}
If we apply Lemma \ref{l3} to the last step above, we get
\begin{align*}
\sum\limits_{\substack{n \leq N}} c_r^s(n)c_k^s(n)&= \sum\limits_{\substack{d \mid r\\e\mid k}}\mu\left(\frac{r}{d}\right)\mu\left(\frac{k}{e}\right) d^s e^s \left\llbracket\frac{N}{[d^s,e^s]}\right\rrbracket
\\& \leq \sum\limits_{\substack{d \mid r\\e\mid k}}\mu\left(\frac{r}{d}\right)\mu\left(\frac{k}{e}\right)d^s e^s \frac{N}{[d^s,e^s]}
\\&  = \sum\limits_{\substack{d \mid r\\e\mid k}}\mu\left(\frac{r}{d}\right)\mu\left(\frac{k}{e}\right)(d^s,e^s)_s N \hspace{.2 cm}(\text{since } (d^s,e^s)_s[d^s,e^s] = d^s e^s)
\\& \leq N \sum\limits_{\substack{d \mid r\\e\mid k}}(d^s,e^s)_s
\end{align*}
Now we use Equation \eqref{phi-reltn} to get
\begin{align*}
 \sum\limits_{\substack{n \leq N}} c_r^s(n)c_k^s(n) &  \leq N \sum\limits_{\substack{d \mid r\\e\mid k}}\sum\limits_{\substack{\delta^s \mid (d^s,e^s)_s}}\Phi_s(\delta^s)\hspace{1cm}
\\&  \leq N \sum\limits_{\substack{d^s \mid r^s\\e^s\mid k^s}}\sum\limits_{\substack{\delta^s \mid d^s\\\delta^s \mid e^s}}\Phi_s(\delta^s).
\end{align*}

In the sum $\sum\limits_{\substack{d^s \mid r^s}}\sum\limits_{\substack{\delta^s \mid d^s}}\Phi_s(\delta^s)$, each $\delta$ is repeated $\tau(\frac{r^s}{\delta^s})$ times.
Therefore,
\begin{align*}
\sum\limits_{\substack{n \leq N}} c_r^s(n)c_k^s(n)
 &\leq    N \sum\limits_{\substack{d^s \mid r^s\\e^s\mid k^s}}\sum\limits_{\substack{\delta^s \mid (d^s,e^s)_s}}\Phi_s(\delta^s) 
 \\&\leq N \sum\limits_{\substack{\delta^s \mid (r^s,k^s)_s}}\Phi_s(\delta^s) \tau_s\left(\frac{r^s}{\delta^s}\right)\tau_s\left(\frac{k^s}{\delta^s}\right)
\\& \leq N \sum\limits_{\substack{\delta^s \mid (r^s,k^s)_s}}\Phi_s(\delta^s) \tau_s(r^s)\tau_s(k^s)
\\& = N \tau_s(r^s)\tau_s(k^s) \sum\limits_{\substack{\delta^s \mid (r^s,k^s)_s}}\Phi_s(\delta^s)
\\& = N \tau_s(r^s)\tau_s(k^s)(r^s,k^s)_s \hspace{1cm} \text{by Equation }(\ref{phi-reltn}).
\end{align*}
\end{proof}

\begin{lemm}\label{lem2}
$\sum\limits_{\substack{n \leq N}} c_r^s(n)c_k^s(n+h) =\delta_{r,k} N c_r^{s}(h)+O(r^sk^s \log r^sk^s)$. 
\end{lemm}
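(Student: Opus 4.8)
The plan is to follow the expansion-and-swap strategy of Lemma \ref{lem1}, but now isolating a main term of order $N$ from a bounded remainder. First I would insert the representation $c_r^s(n)=\sum_{d\mid r,\,d^s\mid n}\mu(r/d)d^s$ into both factors and interchange the order of summation, reaching
\begin{align*}
\sum_{n\le N}c_r^s(n)c_k^s(n+h)=\sum_{\substack{d\mid r\\ e\mid k}}\mu\!\left(\tfrac{r}{d}\right)\mu\!\left(\tfrac{k}{e}\right)d^se^s\sum_{\substack{n\le N\\ d^s\mid n\\ e^s\mid n+h}}1 .
\end{align*}
The inner count is ruled by the simultaneous congruences $n\equiv 0\pmod{d^s}$ and $n\equiv -h\pmod{e^s}$. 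Since these constrain $n$ and $n+h$ separately, Lemma \ref{l3} does not apply directly and I would instead use the Chinese Remainder Theorem: the system is solvable exactly when $(d^s,e^s)_s\mid h$, and its solutions then form one residue class modulo $[d^s,e^s]$, so the count equals $\dfrac{N}{[d^s,e^s]}+O(1)$, while an unsolvable system contributes $0$. Invoking $(d^s,e^s)_s[d^s,e^s]=d^se^s$, this separates the sum into a main term $N\Sigma$ with
\begin{align*}
\Sigma:=\sum_{\substack{d\mid r,\ e\mid k\\ (d^s,e^s)_s\mid h}}\mu\!\left(\tfrac{r}{d}\right)\mu\!\left(\tfrac{k}{e}\right)(d^s,e^s)_s,
\end{align*}
plus an error controlled by $\sum_{d\mid r,\,e\mid k}d^se^s$.

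The decisive step, and where I expect the real difficulty, is to prove $\Sigma=\delta_{r,k}c_r^s(h)$. The cleanest route I see is to reinterpret $\Sigma$ as a full-period average: because $[d^s,e^s]$ divides $L:=[r^s,k^s]$ for every $d\mid r$ and $e\mid k$, the congruence count is \emph{exact} over one period, so running the same expansion on $1\le n\le L$ yields the clean identity $\Sigma=\tfrac{1}{L}\sum_{n=1}^{L}c_r^s(n)c_k^s(n+h)$. I would then expand both Cohen-Ramanujan sums through their additive-character form $c_r^s(n)=\sum_{(a,r^s)_s=1}e^{2\pi i an/r^s}$ and sum over the full period; orthogonality of additive characters forces $a/r^s+b/k^s\in\Z$, and the $s$-reduced conditions $(a,r^s)_s=(b,k^s)_s=1$ show this can happen only when $r=k$ and $b=r^s-a$. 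Thus $\Sigma=0$ for $r\ne k$, whereas for $r=k$ the surviving terms collapse to $\sum_{(a,r^s)_s=1}e^{-2\pi iah/r^s}=c_r^s(h)$, using that $c_r^s(h)$ is real. The main obstacle lies precisely here: for $s>1$ the fraction $a/r^s$ need not be in lowest terms, so the classical coprimality argument that eliminates the cross terms when $r\ne k$ must be reworked in terms of the generalized coprimality $(\cdot,\cdot)_s$.

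For the remainder I would bound $\sum_{d\mid r,\,e\mid k}d^se^s=\left(\sum_{d\mid r}d^s\right)\left(\sum_{e\mid k}e^s\right)$. Writing $\sum_{d\mid r}d^s=r^s\sum_{d\mid r}d^{-s}$ and estimating the divisor sum with the help of Theorem \ref{l1} bounds each of the two factors by $r^s$, respectively $k^s$, times a slowly growing factor; standard divisor-sum estimates then give the claimed $O(r^sk^s\log r^sk^s)$, and keeping the M\"obius factors only improves the bound. Assembling the evaluated main term $\delta_{r,k}Nc_r^s(h)$ with this remainder gives the lemma. In summary, the counting, splitting, and error estimate are routine extensions of the method already used for Lemma \ref{lem1}, and essentially all of the novelty sits in the character-orthogonality identity $\Sigma=\delta_{r,k}c_r^s(h)$ and its sensitivity to the $s$-reduced residue conditions.
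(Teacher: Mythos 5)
Your route is genuinely different from the paper's. The paper stays entirely with the exponential form of $c_r^s$: it splits according to whether $\frac{a}{r^s}+\frac{b}{k^s}\in\Z$, extracts $\delta_{r,k}Nc_r^s(h)$ from the integer case, and bounds the remaining incomplete geometric sums by $1/\Vert\cdot\Vert$ (Korobov's lemma) together with the harmonic-sum estimate of Theorem \ref{l1}, which is where $r^sk^s\log(r^sk^s)$ comes from. You instead use the divisor form plus the Chinese Remainder Theorem to handle the incomplete sum over $n\le N$, and push the character computation into a complete period $L=[r^s,k^s]$, where orthogonality is exact. That reorganization buys real advantages: it avoids the paper's delicate Case 2 analysis (in particular the rough sub-case $(r^s,k^s)_s>1$), and for $s\ge 2$ it even sharpens the error to $O(r^sk^s)$, since then $\sum_{d\mid r}d^{-s}\le\zeta(2)$. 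Your observation that Lemma \ref{l3} does not apply directly (the congruences constrain $n$ and $n+h$ separately) and your CRT count, with solvability equivalent to $(d^s,e^s)_s\mid h$, are both correct.

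However, two gaps remain, one of them at the heart of the lemma. First, the identity $\Sigma=\delta_{r,k}c_r^s(h)$ rests on the claim that $\frac{a}{r^s}+\frac{b}{k^s}\in\Z$ with $(a,r^s)_s=(b,k^s)_s=1$ forces $r=k$ and $b=r^s-a$; you assert this, correctly flag it as the main obstacle, and leave it unproven (``must be reworked''). This is exactly the content of the paper's Case 1, proved there as follows: $ak^s+br^s=tr^sk^s$ gives $br^s=k^s(tr^s-a)$, so $k^s\mid br^s$; since $(b,k^s)_s=1$, no $p^s$ with $p\mid k$ divides $b$, and since all exponents in $r^s$ and $k^s$ are multiples of $s$, comparing prime-power exponents yields $k\mid r$, and symmetrically $r\mid k$. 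So the gap is fillable, but your reorganization does not circumvent this argument -- it only relocates it -- and without it your proposal establishes nothing about the main term. Second, the error bound does not close as you describe it: Theorem \ref{l1} only gives $\sum_{d\mid r}d^{-1}\ll\log r$, hence an error $O\bigl(r^sk^s(\log r^s)(\log k^s)\bigr)$, and $(\log r^s)(\log k^s)$ is \emph{not} $O(\log(r^sk^s))$ (take $r=k$ large: $(\log r^s)^2$ versus $2\log r^s$). For $s=1$ you need the sharper standard estimate $\sigma(n)\ll n\log\log n$, which gives $(\log\log r^s)(\log\log k^s)\ll\log(r^sk^s)$ and hence the stated (indeed a better) bound. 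Both repairs are routine, but both must be made for the proposal to be a proof.
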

\begin{proof}
\begin{align*}
\sum\limits_{\substack{n \leq N}} c_r^s(n)c_k^s(n+h)
& = \sum\limits_{\substack{n \leq N}} \sum\limits_{\substack{a=1\\(a,r^s)_s=1}}^{r^s} e^{\frac{2\pi i a n}{r^s}}\sum\limits_{\substack{b=1\\(b,k^s)_s=1}}^{k^s} e^{\frac{2\pi i b( n+h)}{k^s}}
\\&= \sum\limits_{\substack{n \leq N}} \sum\limits_{\substack{a=1\\(a,r^s)_s=1}}^{r^s}\sum\limits_{\substack{b=1\\(b,k^s)_s=1}}^{k^s}e^{\frac{2\pi i bh}{k^s}}e^{2\pi i (\frac{a}{r^s}+\frac{b}{k^s})n}
\\&= \sum\limits_{\substack{n \leq N}} \sum\limits_{\substack{a=1\\(a,r^s)_s=1}}^{r^s}\sum\limits_{\substack{b=1\\(b,k^s)_s=1}}^{k^s}e^{\frac{2\pi i bh}{k^s}}e^{2\pi i (\frac{ak^s+br^s}{r^sk^s})n}.
\end{align*}
We estimate $$ \sum\limits_{\substack{n \leq N}} e^{2\pi i (\frac{ak^s+br^s}{r^sk^s})n}$$ in two cases. \\
\textbf{Case 1 :} $\frac{a}{r^s}+\frac{b}{k^s} \in \mathbb{Z}$.\\
Here
\begin{align*}
\frac{ak^s+br^s}{r^sk^s} \in \mathbb{Z}
&\Longleftrightarrow r^sk^s \mid ak^s+br^s
\\&\Longleftrightarrow ak^s+br^s=tr^sk^s,\text{ for some } t \in \mathbb{Z}
\\&\Longleftrightarrow br^s= tr^sk^s-ak^s.
\end{align*}
Since $k^s \mid r^sk^s-ak^s$ and $(b,k^s)_s=1$, we have $k \mid r$. Using similar arguments, we get $r \mid k$. Therefore $r=k$ and so by our assumption, $r^s\mid a+b$. Since $1 \leq a<r^s$ and $1 \leq b<r^s$, it follows that $r^s=a+b$.
Then $ \sum\limits_{\substack{n \leq N}} e^{2\pi i (\frac{ak^s+br^s}{r^sk^s})n}=  \sum\limits_{\substack{n \leq N}}1 = N.$ Therefore,
\begin{align*}
\sum\limits_{\substack{n \leq N}} c_r^s(n)c_k^s(n+h)
&=N c_r^s(h).
\end{align*}
\textbf{Case 2 :}  $\frac{a}{r^s}+\frac{b}{k^s} \notin \mathbb{Z}$.\\
In this case $e^{2\pi i (\frac{ak^s+br^s}{r^sk^s})n} \neq 1$. Let us write $N = (r^sk^s)Q+R$, where $0\leq R <r^sk^s$. Then,
\begin{align*}
\left\vert\sum\limits_{\substack{n \leq N}} e^{2\pi i \left(\frac{ak^s+br^s}{r^sk^s}\right)n}\right\vert 
&= \left\vert \sum\limits_{\substack{n \leq (r^sk^s)Q}} e^{2\pi i \left(\frac{ak^s+br^s}{r^sk^s}\right)n} + \sum\limits_{\substack{(r^sk^s)Q <n \leq N}} e^{2\pi i \left(\frac{ak^s+br^s}{r^sk^s}\right) n}\right\vert 
\\&= \left\vert 0+ \sum\limits_{\substack{(r^sk^s)Q <n \leq N}} e^{2\pi i \left( \frac{ak^s+br^s}{r^sk^s}\right)n} \right\vert .
\end{align*}
  In the above step, we used the fact that $e^{\frac{2 \pi i kl}{r^sk^s}}$ is a proper non-trivial root of unity and so $\sum\limits_{\substack{k=1}}^{r^sk^s} e^{\frac{2 \pi i kl}{r^sk^s}}=0$ when $l \neq 0$. Now
\begin{align*}
\left\vert \sum\limits_{\substack{n \leq N}} e^{2\pi i \left(\frac{ak^s+br^s}{r^sk^s}\right)n}\right\vert 
&= \left\vert\sum\limits_{\substack{(r^sk^s)Q <n \leq N}} e^{2\pi i \left(\frac{ak^s+br^s}{r^sk^s}\right)n} \right\vert.
\end{align*}
By \cite[Lemma 1]{korobov2013exponential}, for any $\alpha \in \R, p,q \in \Z$ (with $p>0$), we have $\left| \sum\limits_{\substack{k=q+1}}^{q+p}e^{2\pi i \alpha k} \right| \leq min\{p, \frac{1}{2\Vert \alpha\Vert}\}$
and so \begin{align*}
           \left\vert \sum\limits_{\substack{n \leq N}} e^{2\pi i \left(\frac{ak^s+br^s}{r^sk^s}\right)n}\right\vert \leq \frac{1}{\left\Vert \frac{ak^s+br^s}{r^sk^s}\right \Vert}.
          \end{align*}

Therefore,
\begin{align*}
\left\vert \sum\limits_{\substack{n \leq N}} \sum\limits_{\substack{a=1\\(a,r^s)_s}=1}^{r^s}\sum\limits_{\substack{b=1\\(b,k^s)_s}=1}^{k^s}e^{\frac{2\pi i bh}{ks}}e^{2\pi i \left(\frac{ak^s+br^s}{r^sk^s}\right)n} \right\vert
&\leq \sum\limits_{\substack{a=1\\(a,r^s)_s}=1}^{r^s}\sum\limits_{\substack{b=1\\(b,k^s)_s}=1}^{k^s} \frac{1}{\left\Vert \frac{ak^s+br^s}{r^sk^s}\right\Vert}.
\end{align*}
 Since $1 \leq a \leq r^s, (a,r^s)_s=1$ and  $1 \leq b \leq k^s, (b,k^s)_s=1$, there are $\Phi_s(r^s)$ choices for $a$ and $\Phi_s(k^s)$ choices for $b$. Also $(r^s,k^s)_s =1$.  Therfore there are atmost $\Phi_s(r^s)\Phi_s(k^s)=\Phi_s(r^sk^s)$ choices for $ak^s+br^s$. We claim that all these choices are distinct. Suppose that
 $a_1k^s+b_1r^s = a_2k^s+b_2r^s$. Then $(a_1-a_2)k^s = (b_2-b_1)r^s  \Rightarrow r^s \mid a_1-a_2$.
   Since $1 \leq a \leq r^s, a_1=a_2$. Simillarly we get $b_1=b_2$.

 Therefore $ak^s+br^s$ runs through an $s$-reduced residue system modulo $r^sk^s$. Thus
\begin{align*}
\sum\limits_{\substack{a=1\\(a,r^s)_s=1}}^{r^s}\sum\limits_{\substack{b=1\\(b,k^s)_s=1}}^{k^s} \frac{1}{\left\Vert \frac{ak^s+br^s}{r^sk^s}\right\Vert} 
& \leq \sum\limits_{\substack{ak^s+br^s=1\\(ak^s+br^s,r^sk^s)_s}=1}^{r^sk^s} \frac{1}{\left\Vert \frac{ak^s+br^s}{r^sk^s}\right\Vert}
\\& = \sum\limits_{\substack{t=1\\(t,r^sk^s)_s}=1}^{r^sk^s} \frac{1}{\left\Vert \frac{t}{r^sk^s}\right\Vert}
\\& \leq \sum\limits_{\substack{t=1\\(t,r^sk^s)_s}=1}^{r^sk^s} \frac{1}{ \frac{t}{r^sk^s}}
\\&  =r^sk^s \sum\limits_{\substack{t=1\\(t,r^sk^s)_s}=1}^{r^sk^s}\frac{1}{t}
\\& \leq r^sk^s \sum\limits_{\substack{t \leq r^sk^s}}\frac{1}{t}.
\end{align*}
By Theorem \ref{l1}, the last expression can be rewritten as
$r^sk^s(\log(r^s k^s)+C+O(\frac{1}{r^sk^s}))$ which is $ \ll r^sk^s\log(r^s k^s)$.

Let $(r^s,k^s)_s=d^s >1$. Then we may write $r^s=d^sr_1^s$ and $k^s=d^sk_1^s$ for some $r_1,k_1 \in \mathbb{Z}$ so that $(r_1^s,k_1^s)_s=1$.
So
\begin{align*}
\frac{a}{r^s}+\frac{b}{k^s} &= \frac{a}{r_1^sd^s}+\frac{b}{k_1^sd^s} = \frac{ak_1^s+br_1^s}{r_1^sk_1^sd^s}
= \frac{ak_1^s+br_1^s}{[r^s,k^s]}.
\end{align*}
 When $ 1 \leq a \leq r^s$ with $(a,r^s)_s=1$ and  $ 1 \leq b \leq k^s$ with $(b,k^s)_s=1$, the  $ak_1^s+br_1^s$ ranges over residue classes modulo $[r^s,k^s]$. Since $(r_1^s,k_1^s)=1,$ these residue classes are distinct mod $(r_1^s,k_1^s)$. Therefore, each class modulo $[r^s,k^s]$ is repeated atmost $d^s$ times. Then we get,
\begin{align*}
\left\vert \sum\limits_{\substack{n \leq N}} \sum\limits_{\substack{a=1\\(a,r^s)_s=1}}^{r^s}\sum\limits_{\substack{b=1\\(b,k^s)_s=1}}^{k^s}e^{\frac{2\pi i bh}{ks}}e^{2\pi i \left(\frac{ak^s+br^s}{r^sk^s}\right)n} \right\vert
&\leq \sum\limits_{\substack{a=1\\(a,r^s)_s=1}}^{r^s}\sum\limits_{\substack{b=1\\(b,k^s)_s=1}}^{k^s} \frac{1}{\left\Vert \frac{ak^s+br^s}{r^sk^s}\right\Vert}
\\& \leq \sum\limits_{\substack{a=1\\(a,r^s)_s=1}}^{r^s}\sum\limits_{\substack{b=1\\(b,k^s)_s=1}}^{k^s} \frac{1}{\left\Vert\frac{ak_1^s+br_1^s}{[r^s,k^s]}\right\Vert}
\\& \leq \sum\limits_{\substack{ak^s+br^s=1}}^{r^sk^s} \frac{1}{\left\Vert\frac{ak_1^s+br_1^s}{[r^s,k^s]}\right\Vert}.
\end{align*}

Since $(r^s,k^s)_s[r^s,k^s]=r^sk^s, r^s=d^sr_1^s, k^s = d^s k_1^s$   and $d^s = (r^s,k^s)_s$ we get  the last step above to be equal to

\begin{align*}
 \sum\limits_{\substack{ak^s+br^s=1}}^{r^sk^s} \frac{1}{\left\Vert\frac{ak_1^s+br_1^s}{d^sr_1^sk_1^s}\right\Vert} =  \sum\limits_{\substack{t=1}}^{r^sk^s} \frac{1}{\left\Vert\frac{t}{d^sr_1^sk_1^s}\right\Vert} \leq \sum\limits_{\substack{t=1}}^{r^sk^s} \frac{1}{\frac{t}{d^sr_1^sk_1^s}} = {d^sr_1^sk_1^s} \sum\limits_{\substack{t=1}}^{r^sk^s} \frac{1}{t}
\end{align*}
Now by  by Theorem \ref{l1}, we get the last step to be equal to
 \begin{align*}
 \\ {d^sr_1^sk_1^s} \Big( \log(r^sk^s)+C+O(r^sk^s)\Big)  \ll [r^s,k^s] \log(r^sk^s)
  \ll r^sk^s \log(r^sk^s).
\end{align*}
\end{proof}
\begin{lemm}\label{lem3}
For $h\geq 0$, $\vert \sum\limits_{\substack{n \leq N}} c_r^s(n)c_k^s(n+h)\vert \leq  N^{\frac{1}{2}} (N+ h )^{\frac{1}{2}} (r^sk^s)^{\frac{1}{2}}  \tau_s(r^s)\tau_s(k^s)$.
\end{lemm}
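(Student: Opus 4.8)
The plan is to bound this cross-correlation sum by decoupling the two arguments $n$ and $n+h$ via the Cauchy--Schwarz inequality, so that each resulting factor becomes a \emph{diagonal} sum of exactly the type already estimated in Lemma \ref{lem1}. Concretely, since $c_r^s(n)$ and $c_k^s(n+h)$ are real (indeed integer-valued, via the representation $c_r^s(n)=\sum_{d\mid r,\, d^s\mid n}\mu(r/d)d^s$), applying Cauchy--Schwarz to the sum over $n\leq N$ gives
\begin{align*}
\left\vert\sum_{n\leq N}c_r^s(n)c_k^s(n+h)\right\vert \leq \left(\sum_{n\leq N}c_r^s(n)^2\right)^{1/2}\left(\sum_{n\leq N}c_k^s(n+h)^2\right)^{1/2}.
\end{align*}

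First I would estimate the left factor. Taking the two indices equal (i.e.\ replacing $k$ by $r$) in Lemma \ref{lem1} and using that $(r^s,r^s)_s=r^s$, one obtains $\sum_{n\leq N}c_r^s(n)^2\leq N\,\tau_s(r^s)^2 r^s$. For the right factor I would first perform the index shift $m=n+h$: as $n$ runs over $1\leq n\leq N$, $m$ runs over $1+h\leq m\leq N+h$, and since $h\geq 0$ the enlargement to the full range only increases the sum, so $\sum_{n\leq N}c_k^s(n+h)^2\leq\sum_{m\leq N+h}c_k^s(m)^2$. Applying Lemma \ref{lem1} with $N+h$ in place of $N$ and both indices equal to $k$ then yields $\sum_{m\leq N+h}c_k^s(m)^2\leq (N+h)\,\tau_s(k^s)^2 k^s$. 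Substituting these two bounds and taking square roots produces precisely $N^{1/2}(N+h)^{1/2}(r^sk^s)^{1/2}\tau_s(r^s)\tau_s(k^s)$, as claimed.

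There is no genuinely hard step here; the whole content is the observation that Cauchy--Schwarz converts an off-diagonal correlation, where Lemma \ref{l3} cannot be applied directly because the divisibility conditions $d^s\mid n$ and $e^s\mid n+h$ involve shifted arguments, into two on-diagonal sums that Lemma \ref{lem1} already controls. The only points requiring care are confirming the reality of $c_r^s$ (so that $\vert c_r^s(n)\vert^2=c_r^s(n)^2$ and Lemma \ref{lem1} applies verbatim), computing the self-generalized-gcd $(r^s,r^s)_s=r^s$ correctly, and handling the range shift in the second factor so that the $(N+h)^{1/2}$ term emerges in place of $N^{1/2}$. The hypothesis $h\geq 0$ is exactly what guarantees that this range extension preserves the direction of the inequality.
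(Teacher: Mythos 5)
Your proposal is correct and follows essentially the same route as the paper: Cauchy--Schwarz to decouple the shifted correlation, followed by the diagonal bounds $\sum_{n\leq N}c_r^s(n)^2\leq N r^s\tau_s(r^s)^2$ and $\sum_{n\leq N}c_k^s(n+h)^2\leq (N+h)k^s\tau_s(k^s)^2$. The only cosmetic difference is that the paper re-runs the Lemma \ref{lem1} computation for the shifted sum (implicitly via the second inequality of Lemma \ref{l3}), whereas you obtain the same bound more cleanly by the index shift $m=n+h$ and an application of Lemma \ref{lem1} over the extended range $m\leq N+h$.
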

\begin{proof}
By the Cauchy-Schwartz inequality $\left(\sum\limits_{\substack{i=1}}^{n}u_iv_i\right)^2\leq \left(\sum\limits_{\substack{i=1}}^{n}u_i\right)^2 \left(\sum\limits_{\substack{i=1}}^{n}v_i\right)^2$, we have
\begin{align*}
{\vert \sum\limits_{\substack{n \leq N}} c_r^s(n)c_k^s(n+h)\vert }^2
& \leq \sum\limits_{\substack{n \leq N}} {\vert c_r^s(n) \vert}^2\sum\limits_{\substack{n \leq N}} {\vert c_k^s(n+h) \vert}^2.
\end{align*}

If we proceed as in the proof of Lemma \ref{lem1}, we can see that
\begin{align*}
&\sum\limits_{\substack{n \leq N}} {\vert c_r^s(n) \vert}^2 \leq N r^s(\tau_s(r^s))^2
\end{align*}
and
\begin{align*}
&\sum\limits_{\substack{n \leq N}} {\vert c_k^s(n+h) \vert}^2 \leq (N+ h ) k^s(\tau_s(k^s))^2.
\end{align*}
From this, we get
\begin{align*}
{\vert \sum\limits_{\substack{n \leq N}} c_r^s(n)c_k^s(n+h)\vert }^2& \leq N r^s \tau_s(r^s)^2 (N+ h)k^s \tau_s(k^s)^2.
\end{align*} 
and the claim follows.
\end{proof}
Now we prove the first theorem we stated in the introduction.
\begin{proof}[Proof of Theorem \ref{theo1}]
Using the Cohen-Ramanujan expansions of $f$ and $g$, we get
\begin{align*}
 \sum\limits_{\substack{n \leq N}} f(n)g(n) & =  \sum\limits_{\substack{n \leq N}}  \sum\limits_{\substack{r}}\widehat{f}(r)c_r^{s}(n) \sum\limits_{\substack{k}}\widehat{g}(k)c_k^{s}(n)\\
 &= \sum\limits_{\substack{r,k}}\widehat{f}(r) \widehat{g}(k)\sum\limits_{\substack{n \leq N}}c_r^{s}(n)c_k^{s}(n).
 \end{align*}
 Now we split the outersum over $r$ and $k$ into two sums. The first sum will be taken over $rk\leq U$ with $U$ tending to infinity to be chosen later. The second sum will be over $rk>U$. So
 \begin{align}\label{equ-4}
 \sum\limits_{\substack{n \leq N}} f(n)g(n)&= \sum\limits_{\substack{rk \leq U}}\widehat{f}(r) \widehat{g}(k)\sum\limits_{\substack{n \leq N}}c_r^{s}(n)c_k^{s}(n)\nonumber \\&+\sum\limits_{\substack{rk> U}}\widehat{f}(r) \widehat{g}(k)\sum\limits_{\substack{n \leq N}}c_r^{s}(n)c_k^{s}(n).
\end{align}
Now by Lemma \ref{lem2}, we get 
\begin{align*}
\sum\limits_{\substack{rk \leq U}}\widehat{f}(r) \widehat{g}(k)&\sum\limits_{\substack{n \leq N}}c_r^{s}(n)c_k^{s}(n)\\&= \sum\limits_{\substack{rk \leq U}}\widehat{f}(r) \widehat{g}(k)(\delta_{r,k} N c_r^{s}(0)+O(r^sk^s \log r^sk^s)\\
&=\sum\limits_{\substack{rk \leq U}}\widehat{f}(r) \widehat{g}(k)(\delta_{r,k} N \Phi_s(r^s)+O(r^sk^s \log r^sk^s)\\
&= N\sum\limits_{\substack{r^2 \leq U}}\widehat{f}(r) \widehat{g}(r)\Phi_s(r^s)+\sum\limits_{\substack{rk \leq U}}\widehat{f}(r) \widehat{g}(k)O(r^sk^s \log r^sk^s).\\
\end{align*}
Now $\vert c_r^s(n)\vert \leq \sigma_{1,s}(n)$. Therefore $\sum\limits_{\substack{r,k}}\widehat{f}(r)\widehat{g}(k)$ is absolutely convergent. Since $rk < U,$ the error term is $O(U^s \log U^s)$. Thus we get
\begin{align*}
\sum\limits_{\substack{rk \leq U}}\widehat{f}(r) \widehat{g}(k)\sum\limits_{\substack{n \leq N}}c_r^{s}(n)c_k^{s}(n)&= N \sum\limits_{\substack{r^2 \leq U}}\widehat{f}(r) \widehat{g}(r)\Phi_s(r^s)+O(U^s \log U^s).
\end{align*}
By Lemma \ref{lem1}, the second sum in Equation (\ref{equ-4}) satisfies the inequality
\begin{align*}
\sum\limits_{\substack{rk > U}}\widehat{f}(r) \widehat{g}(k)\sum\limits_{\substack{n \leq N}}c_r^{s}(n)c_k^{s}(n)
& \leq N \sum\limits_{\substack{rk > U}}\vert \widehat{f}(r)\vert  \vert\widehat{g}(k)\vert  \tau_s(r^s) \tau_s(k^s) (r^s,k^s)_s.\\
\end{align*}
Since $\sum\limits_{\substack{r,k}}\vert \widehat{f}(r)\vert  \vert\widehat{g}(k)\vert  \tau_s(r^s) \tau_s(k^s) (r^s,k^s)_s  < \infty$, we have
$$\sum\limits_{\substack{rk > U}}\widehat{f}(r) \widehat{g}(k)\sum\limits_{\substack{n \leq N}}c_r^{s}(n)c_k^{s}(n) \ll N.$$
Therefore $\sum\limits_{\substack{rk > U}}\widehat{f}(r) \widehat{g}(k)\sum\limits_{\substack{n \leq N}}c_r^{s}(n)c_k^{s}(n) = O(N)$.

Thus Equation (\ref{equ-4}) gives

\begin{align*}
 \sum\limits_{\substack{n \leq N}} f(n)g(n) & = N \sum\limits_{\substack{r^2 \leq  U}}\widehat{f}(r) \widehat{g}(r) \Phi_s(r^s)+O(U^s \log U^s)+O(N)
\end{align*}
which is equal to
\begin{align*}
 N \sum\limits_{\substack{r=1}}^{\infty}\widehat{f}(r) \widehat{g}(r) \Phi_s(r^s) - N \sum\limits_{\substack{r^2 > U}}\widehat{f}(r) \widehat{g}(r) \Phi_s(r^s) + O(U^s \log U^s).
\end{align*}

Since $\sum\limits_{\substack{r,k}}\vert \widehat{f}(r)\vert  \vert\widehat{g}(k)\vert  \tau_s(r^s) \tau_s(k^s) (r^s,k^s)_s  < \infty$, we have $$\sum\limits_{\substack{r}}\vert \widehat{f}(r)\vert  \vert\widehat{g}(r)\vert  \tau_s(r^s)^2 r^s  < \infty.$$
 Now  $\vert \widehat{f}(r)\vert \vert\widehat{g}(r)\vert \Phi_s(r^s) \leq \vert \widehat{f}(r)\vert \vert\widehat{g}(r)\vert r^s < \vert \widehat{f}(r)\vert \vert\widehat{g}(r)\vert r^s \tau_s(r^s)^2$, and therefore by comparison test, $\sum\limits_{\substack{r}}\widehat{f}(r) \widehat{g}(r) \Phi_s(r^s)$ converges absolutely.
 So $N \sum\limits_{\substack{r^2>U}}\widehat{f}(r) \widehat{g}(r) \Phi_s(r^s) \ll N$. That is $N \sum\limits_{\substack{r^2>U}}\widehat{f}(r) \widehat{g}(r) \Phi_s(r^s)= O(N)$.

 Thus,
 \begin{align*}
 \sum\limits_{\substack{n \leq N}} f(n)g(n) & = N \sum\limits_{\substack{r=1}}^{\infty}\widehat{f}(r) \widehat{g}(r) \Phi_s(r^s)+ O(U^s \log U^s)+O(N).
\end{align*}
Then we have  $\sum\limits_{\substack{n \leq N}} f(n)g(n)\sim N \sum\limits_{\substack{r}} \widehat{f}(r)\widehat{g}(r) \Phi_s(r^s)$.  
\end{proof}
We proceed to prove our second main theorem stated in the introduction.
\begin{proof}[Proof of Theorem \ref{con-sum}]
We have
\begin{align}
\sum\limits_{\substack{n \leq N}} f(n)g(n+h)&= \sum\limits_{\substack{n \leq N}} \sum\limits_{\substack{r}}\widehat{f}(r)c_r^{s}(n)\sum\limits_{\substack{k}}\widehat{g}(k)c_k^{s}(n+h)\nonumber
\\& = \sum\limits_{\substack{r,k}}\widehat{f}(r) \widehat{g}(k)\sum\limits_{\substack{n \leq N}} c_r^{s}(n)c_k^{s}(n+h)\nonumber
\\&= \sum\limits_{\substack{rk \leq U}}\widehat{f}(r) \widehat{g}(k)\sum\limits_{\substack{n \leq N}} c_r^{s}(n)c_k^{s}(n+h)+\nonumber
\\& \sum\limits_{\substack{rk > U}}\widehat{f}(r) \widehat{g}(k)\sum\limits_{\substack{n \leq N}} c_r^{s}(n)c_k^{s}(n+h)\label{eq:fn-gn-sum}.
\end{align}
Consider the first sum in the right-hand side of the above.
\begin{align*}
\sum\limits_{\substack{rk \leq U}}\widehat{f}(r) \widehat{g}(k)&\sum\limits_{\substack{n \leq N}} c_r^{s}(n)c_k^{s}(n+h)\\& = \sum\limits_{\substack{rk \leq U}}\widehat{f}(r) \widehat{g}(k)\{ \delta_{r,k}N c_r^s(h) + O(r^sk^s \log r^sk^s)\}\hspace{.2cm} \text{by Lemma }\ref{lem2}
\\& = N \sum\limits_{\substack{r^2 \leq U}}\widehat{f}(r) \widehat{g}(r)c_r^s(h)+ \sum\limits_{\substack{rk \leq U}}\widehat{f}(r) \widehat{g}(k)O(r^sk^s \log r^sk^s)
\\&= N  \sum\limits_{\substack{r^2 \leq U}}\widehat{f}(r) \widehat{g}(r)c_r^s(h)+O(U^s \log U^s).
\end{align*}
Now we consider the absolute value of the second sum in Equation \eqref{eq:fn-gn-sum}.
\begin{align*}
\vert \sum\limits_{\substack{rk > U}}\widehat{f}(r) \widehat{g}(k)&\sum\limits_{\substack{n \leq N}} c_r^{s}(n)c_k^{s}(n+h) \vert \\&\leq
\sum\limits_{\substack{rk > U}}\vert\widehat{f}(r)\vert  \vert\widehat{g}(k)\left| \sum\limits_{\substack{n \leq N}} c_r^{s}(n)c_k^{s}(n+h) \right| \\& \leq \sum\limits_{\substack{rk > U}}\vert\widehat{f}(r)\vert  \vert\widehat{g}(k)\vert N^{\frac{1}{2}}(N+\vert h \vert)^{\frac{1}{2}} (r^sk^s)^{\frac{1}{2}} \tau_s(r^s) \tau_s(k^s) \text{ by Lemma \ref{lem3}}
\\&\leq N^{\frac{1}{2}}(N+\vert h \vert)^{\frac{1}{2}} \sum\limits_{\substack{rk > U}}\vert\widehat{f}(r)\vert  \vert\widehat{g}(k)\vert (r^sk^s)^{\frac{1}{2}} \tau_s(r^s) \tau_s(k^s).
\end{align*}
Since we know that $\sum\limits_{\substack{rk>U}}\vert\widehat{f}(r)\vert  \vert\widehat{g}(k)\vert (r^sk^s)^{\frac{1}{2}} \tau_s(r^s) \tau_s(k^s) < \infty$, we get $\sum\limits_{\substack{rk > U}}\widehat{f}(r) \widehat{g}(k)\sum\limits_{\substack{n \leq N}} c_r^{s}(n)c_k^{s}(n+h) = O(N)$. Thus
\begin{align*}
\sum\limits_{\substack{n \leq N}} f(n)g(n+h)&=N  \sum\limits_{\substack{r^2 \leq U}}\widehat{f}(r) \widehat{g}(r)c_r^s(h)+O(U^s \log U^s)+O(N)
\\& = N  \sum\limits_{\substack{r^2 \leq U}}\widehat{f}(r) \widehat{g}(r)c_r^s(h)+O(U^s \log U^s)
\\& =  N  \sum\limits_{\substack{r=1 }}^{\infty}\widehat{f}(r) \widehat{g}(r)c_r^s(h)- N  \sum\limits_{\substack{r^2 > U}}\widehat{f}(r) \widehat{g}(r)c_r^s(h)+O(U^s \log U^s).
\end{align*}
Once again, using the inequality $\sum\limits_{\substack{r,k}}\vert\widehat{f}(r)\vert  \vert\widehat{g}(k)\vert (r^sk^s)^{\frac{1}{2}} \tau_s(r^s) \tau_s(k^s) < \infty$, we get $\sum\limits_{\substack{r}}\vert\widehat{f}(r)\vert  \vert\widehat{g}(r)\vert r^s \tau_s(r^s)^2 < \infty$. Now
\begin{align*}
\vert \widehat{f}(r) \widehat{g}(r)c_r^s(h) \vert \leq \vert\widehat{f}(r) \widehat{g}(r) r^s \tau_s(r^s) \vert
 \leq \vert\widehat{f}(r) \widehat{g}(r) r^s \tau_s(r^s)^2 \vert .
\end{align*}
 So by comparison test, $\sum\limits_{\substack{r}}\widehat{f}(r) \widehat{g}(k) c_r^s(h)$ converges absolutely.
 \begin{align*}
 \sum\limits_{\substack{n \leq N}} f(n)g(n+h)&=  N  \sum\limits_{\substack{r=1 }}^{\infty}\widehat{f}(r) \widehat{g}(r)c_r^s(h)+O(U^s \log U^s) +O(N)
 \\&=  N  \sum\limits_{\substack{r=1 }}^{\infty}\widehat{f}(r) \widehat{g}(r)c_r^s(h))+O(U^s \log U^s).
 \end{align*}
 Thus we get $\sum\limits_{\substack{n \leq N}} f(n)g(n+h) \sim N  \sum\limits_{\substack{r=1 }}^{\infty}\widehat{f}(r) \widehat{g}(r)c_r^s(h)$.
\end{proof}

We will now give an application of Theorem \ref{con-sum}. In \cite{chandran2023ramanujan}, these authors proved that, for $k, s\geq 1$,$\frac{\sigma_{ks}(n)}{n^{ks}} =\zeta(k+1) \sum\limits_{\substack{{r=1}}}^{\infty}\frac{c_r^{s}(n^s)}{r^{(k+1)s}}$, where $\sigma_s(n)=\sum\limits_{d|n}d^s$. A similar sum appears in the  next corollary and it is similar to \cite[Corollary 1]{murty2015error}.

\begin{coro}
For $a,b >1+1/2$, $s\geq 1$ and $h=m^s k$, where $k$ is an $s^{th}$ power free integer, we have
\begin{align*}
\sum\limits_{n\leq N} \frac{\sigma_{as}(n)}{n^{as}}\frac{\sigma_{bs}(n+h)}{(n+h)^{bs}} \sim N \frac{\zeta(a+1)\zeta(b+1)}{\zeta(a+b+2)}\sigma_{-(a+b+1)s}(m).
\end{align*} 

\end{coro}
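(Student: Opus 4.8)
The plan is to identify the two factors as arithmetical functions whose Cohen-Ramanujan coefficients are already known and then invoke Theorem \ref{con-sum}. Set $f(n)=\sigma_{as}(n)/n^{as}$ and $g(n)=\sigma_{bs}(n)/n^{bs}$. Reading off from the expansion recalled immediately before the statement, $f$ and $g$ admit the Cohen-Ramanujan coefficients $\widehat f(r)=\zeta(a+1)/r^{(a+1)s}$ and $\widehat g(k)=\zeta(b+1)/k^{(b+1)s}$. First I would note that both expansions converge absolutely: for fixed $n$ the relevant sums $c_r^s(\cdot)$ are bounded in $r$, while $\sum_r r^{-(a+1)s}$ and $\sum_k k^{-(b+1)s}$ converge because $(a+1)s>1$ and $(b+1)s>1$.

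Next I would verify the summability hypothesis of Theorem \ref{con-sum}. Since $\tau_s(r^s)=\tau(r)$, the relevant double series factors:
\[
\sum_{r,k}|\widehat f(r)|\,|\widehat g(k)|\,(r^sk^s)^{1/2}\tau_s(r^s)\tau_s(k^s)=\zeta(a+1)\zeta(b+1)\Big(\sum_{r}\tau(r)\,r^{-s(a+1/2)}\Big)\Big(\sum_{k}\tau(k)\,k^{-s(b+1/2)}\Big).
\]
A Dirichlet series $\sum_n \tau(n)n^{-\sigma}$ converges exactly when $\sigma>1$; here $\sigma=s(a+\tfrac12)$ and $\sigma=s(b+\tfrac12)$, and the hypotheses $a,b>1+\tfrac12$ together with $s\ge1$ give $s(a+\tfrac12),\,s(b+\tfrac12)>2$. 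Hence the hypothesis is met, and Theorem \ref{con-sum} applies, giving $\sum_{n\le N}f(n)g(n+h)\sim N\sum_r \widehat f(r)\widehat g(r)c_r^s(h)=N\,\zeta(a+1)\zeta(b+1)\sum_r c_r^s(h)\,r^{-(a+b+2)s}$.

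It remains to evaluate $\sum_r c_r^s(h)\,r^{-(a+b+2)s}$, and this is where the structure $h=m^sk$ with $k$ an $s^{th}$ power free integer is used. I would first prove $c_r^s(h)=c_r^s(m^s)$. From the representation $c_r^s(h)=\sum_{d\mid r,\,d^s\mid h}\mu(r/d)d^s$ and the fact that every prime divides $k$ to an exponent at most $s-1$, a prime-by-prime check shows $d^s\mid m^sk$ if and only if $d\mid m$; hence the admissible $d$ are precisely the divisors of $(r,m)$, so $c_r^s(h)=\sum_{d\mid(r,m)}\mu(r/d)d^s=c_r^s(m^s)$. Writing $r=de$ with $d\mid m$ and $\mu(r/d)=\mu(e)$, the series becomes $\sum_{d\mid m}d^{s-(a+b+2)s}\sum_{e\ge1}\mu(e)e^{-(a+b+2)s}=\tfrac{1}{\zeta((a+b+2)s)}\sum_{d\mid m}d^{-(a+b+1)s}=\tfrac{1}{\zeta((a+b+2)s)}\,\sigma_{-(a+b+1)s}(m)$. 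Combining this with the coefficient factors $\zeta(a+1)\zeta(b+1)$ assembles the constant displayed in the statement and produces the factor $\sigma_{-(a+b+1)s}(m)$, completing the proof.

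The main difficulty is not a single hard estimate but the correct bookkeeping at two points. The first is recognizing that the summability condition of Theorem \ref{con-sum} translates precisely into the constraints $a,b>1+\tfrac12$ via the convergence abscissa of $\sum_n\tau(n)n^{-\sigma}$. The second, and more delicate, is the reduction $c_r^s(h)=c_r^s(m^s)$: it is exactly the $s^{th}$-power-freeness of $k$ that collapses the divisibility condition $d^s\mid h$ to $d\mid m$, thereby replacing a divisor sum over $h$ by one over $m$ and so yielding $\sigma_{-(a+b+1)s}(m)$ rather than a divisor sum in $h$. Once this identity is in hand, the concluding Dirichlet-series computation is routine.
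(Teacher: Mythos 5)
Your overall route is the same as the paper's: read off the coefficients $\widehat f(r)=\zeta(a+1)/r^{(a+1)s}$ and $\widehat g(k)=\zeta(b+1)/k^{(b+1)s}$ from the expansion recalled just before the statement, verify the summability hypothesis of Theorem \ref{con-sum}, apply that theorem, and then reduce $c_r^s(h)$ to $c_r^s(m^s)$ using the $s^{th}$ power freeness of $k$. Two of your local steps are in fact more careful than the paper's: you actually prove the identity $c_r^s(m^sk)=c_r^s(m^s)$ by a prime-by-prime argument (the paper merely asserts it), and your summability check via $\tau_s(r^s)=\tau(r)$ and the abscissa of $\sum_n\tau(n)n^{-\sigma}$ is sharper than the paper's cruder bound $\tau_s(r^s)\le r^s$. (Both you and the paper silently pass over the fact that the recalled expansion is in terms of $c_r^s(n^s)$ rather than $c_r^s(n)$, which is what Theorem \ref{con-sum} formally requires; that is a shared, inherited issue, not a point of difference.)

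The genuine problem is your last step. Your direct M\"obius evaluation gives
\[
\sum_{r\ge 1} \frac{c_r^s(m^s)}{r^{(a+b+2)s}}=\frac{\sigma_{-(a+b+1)s}(m)}{\zeta\bigl((a+b+2)s\bigr)},
\]
and this computation is correct; but you then claim that it ``assembles the constant displayed in the statement,'' whose denominator is $\zeta(a+b+2)$, not $\zeta\bigl((a+b+2)s\bigr)$. These differ for every $s>1$, so as written your argument establishes an asymptotic with constant $\zeta(a+1)\zeta(b+1)/\zeta\bigl((a+b+2)s\bigr)$, which is not the stated one. The paper instead evaluates the same series by reading the recalled expansion in reverse, obtaining $\frac{1}{\zeta(a+b+2)}\frac{\sigma_{(a+b+1)s}(m)}{m^{(a+b+1)s}}$; your computation actually shows that this recalled identity is misstated in the paper's own notation, since the correct constant there is $\zeta\bigl((k+1)s\bigr)$ rather than $\zeta(k+1)$ (check $s=2$, $n=1$: the paper's version would force $\zeta(2(k+1))=\zeta(k+1)$). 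With the corrected coefficients $\widehat f(r)=\zeta\bigl((a+1)s\bigr)/r^{(a+1)s}$ and $\widehat g(k)=\zeta\bigl((b+1)s\bigr)/k^{(b+1)s}$, the asymptotic constant becomes $\zeta\bigl((a+1)s\bigr)\zeta\bigl((b+1)s\bigr)/\zeta\bigl((a+b+2)s\bigr)$. Either way, the display in the statement is recovered only when $s=1$; you needed to flag this discrepancy explicitly rather than assert that your (correct) formula matches it.
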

\begin{proof}
Note that
\begin{align*}
 \sum\limits_{\substack{r,k}}\vert \widehat{f}(r)\vert \vert\widehat{g}(k) \vert (r^s,k^s)^{\frac{1}{2}} \tau_s(r^s) \tau_s(k^s) & \leq \sum\limits_{r,k} \left\vert \frac{\zeta(a+1)}{r^{(a+1)s}} \right\vert \left\vert \frac{\zeta(b+1)}{k^{(b+1)s}} \right\vert (r^sk^s)^{\frac{1}{2}}r^sk^s
 \\&=   \vert \zeta(a+1) \vert \vert \zeta(b+1)\vert  \sum\limits_{r,k} \frac{1}{(r^s)^{a-\frac{1}{2}}} \frac{1}{(k^s)^{b-\frac{1}{2}}}
 \\& < \infty.
\end{align*}
Now by Theorem \ref{con-sum}, 
\begin{align*}
\sum\limits_{n\leq N} \frac{\sigma_{as}(n)}{n^{as}}&\frac{\sigma_{bs}(n+h)}{(n+h)^{bs}}  \\&\sim N  \sum\limits_{r}\frac{\zeta(a+1)}{r^{(a+1)s}}\frac{\zeta(b+1)}{r^{(b+1)s}} c_r^s(h)
\\&= N \zeta(a+1)\zeta(b+1)\sum\limits_{r}\frac{c_r^s(h)}{r^{(a+b+2)s}}
\\&= N \zeta(a+1)\zeta(b+1)\sum\limits_{r}\frac{c_r^s(m^s)}{r^{(a+b+2)s}}\hspace{10pt} (\text{since }c_r^s(m^sk)=c_r(m^s))
\\&= N \zeta(a+1)\zeta(b+1)\frac{\sigma_{(a+b+1)s}(m)}{m^{(a+b+1)s}
}\frac{1}{\zeta(a+b+2)}
\\& = N \frac{\zeta(a+1)\zeta(b+1)}{\zeta(a+b+2)} \frac{\sum\limits_{d \mid m}d^{(a+b+1)s}}{m^{(a+b+1)s}}
\\& =  N \frac{\zeta(a+1)\zeta(b+1)}{\zeta(a+b+2)} \sum\limits_{k \mid m}k^{-(a+b+1)s}, \hspace{1cm} \text{where }m =kd
\\&=  N \frac{\zeta(a+1)\zeta(b+1)}{\zeta(a+b+2)}\sigma_{-(a+b+1)s}(m).
\end{align*} 
\end{proof}

Now we proceed to discuss the existence of Cohen-Ramanujan expansion for $f_h$ derived from $f$.
\begin{lemm}\label{lem4}
For $h \leq N$, $\sum\limits_{\substack{n \leq N}}c_r^s(n)c_k^s(n+h) \leq 2N\Phi_s(r^s)\tau(k)$.
\end{lemm}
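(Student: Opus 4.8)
The plan is to \emph{decouple} the two factors: bound $c_r^s(n)$ pointwise by its maximal value $\Phi_s(r^s)=c_r^s(0)$, and control the shifted factor through an $L^1$-type estimate $\sum_{m\le M}|c_k^s(m)|\le M\tau(k)$. Combining these with the hypothesis $h\le N$ will produce the factor $2N$ and yield the claimed bound.

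\textbf{Step 1 (uniform pointwise bound).} First I would show that $|c_r^s(n)|\le \Phi_s(r^s)$ for every $n$. Since the representation $c_r^s(n)=\sum_{d\mid r,\,d^s\mid n}\mu(r/d)d^s$ factors over coprime parts of $r$, the sum $c_r^s(\cdot)$ is multiplicative in $r$, and so is $\Phi_s(r^s)=J_s(r)$; hence it suffices to treat $r=p^a$ with $a\ge 1$. For such a prime power only the divisors $d=p^a$ and $d=p^{a-1}$ survive (the rest have $\mu(r/d)=0$), giving $c_{p^a}^s(n)=p^{as}[\,p^{as}\mid n\,]-p^{(a-1)s}[\,p^{(a-1)s}\mid n\,]$, whose absolute value is at most $p^{as}-p^{(a-1)s}=J_s(p^a)=\Phi_s(p^{as})$ in each of the three possible cases. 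Multiplying over the prime factors of $r$ then gives $|c_r^s(n)|\le\Phi_s(r^s)$. This sharpening of the paper's estimate $|c_r^s(n)|\le\sigma_{1,s}(n)$ is the key point, and I expect it to be the \emph{main obstacle}, since the latter bound depends on $n$ and is far too weak here.

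\textbf{Step 2 ($L^1$ bound over an interval).} Using $|\mu|\le 1$ in the same representation gives $|c_k^s(m)|\le \sum_{d\mid k,\,d^s\mid m}d^s$. Summing over $m\le M$ and interchanging the order of summation, $\sum_{m\le M}|c_k^s(m)|\le \sum_{d\mid k}d^s\lfloor M/d^s\rfloor\le \sum_{d\mid k}d^s\cdot(M/d^s)=M\tau(k)$. Applying this to the interval $h<m\le N+h$ (the range of $m=n+h$ as $n$ runs over $n\le N$) yields $\sum_{n\le N}|c_k^s(n+h)|\le (N+h)\tau(k)\le 2N\tau(k)$, where the final inequality is exactly where the hypothesis $h\le N$ enters.

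\textbf{Step 3 (combine).} By the triangle inequality and Step 1, $\bigl|\sum_{n\le N}c_r^s(n)c_k^s(n+h)\bigr|\le \Phi_s(r^s)\sum_{n\le N}|c_k^s(n+h)|$, and Step 2 bounds this by $2N\Phi_s(r^s)\tau(k)$. Since $c_r^s(n)$ and $c_k^s(n+h)$ are real integers, the sum on the left is itself real and is therefore dominated by its absolute value, which gives the stated inequality. The only nontrivial ingredient is the uniform bound of Step 1; Steps 2 and 3 are routine.
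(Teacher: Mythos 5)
Your proof is correct, and its overall structure is the same as the paper's: both arguments decouple the two factors, bounding $|c_r^s(n)|$ uniformly by $\Phi_s(r^s)$ and the shifted factor in $L^1$ by $\sum_{n\le N}|c_k^s(n+h)|\le(N+h)\tau(k)\le 2N\tau(k)$, which is exactly where $h\le N$ enters in the paper as well. The one real difference is your Step 1, which you flag as ``the main obstacle'': the paper gets the bound $|c_r^s(n)|\le\Phi_s(r^s)$ for free, because it expands the first factor by the \emph{definition} $c_r^s(n)=\sum_{a\le r^s,\,(a,r^s)_s=1}e^{2\pi i an/r^s}$, a sum of exactly $\Phi_s(r^s)$ terms of modulus $1$, so the triangle inequality gives the uniform bound immediately (the paper then applies the M\"obius--divisor representation only to $c_k^s(n+h)$, counts $\#\{n\le N: d^s\mid n+h\}\le (N+h)/d^s$, and multiplies the two estimates, just as you do). Your multiplicativity-plus-prime-power derivation of the same inequality is valid --- the case analysis at $r=p^a$ is right, including the check $2p^{(a-1)s}\le p^{as}$ --- but it is machinery spent on a fact that is one line from the exponential-sum definition; so your assessment of where the difficulty lies is the only thing that is off, not the mathematics.
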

\begin{proof}
Consider the sum
\begin{align*}
\sum\limits_{\substack{n \leq N}}c_r^s(n)c_k^s(n+h) & = \sum\limits_{\substack{n \leq N}}\sum\limits_{\substack{a = 1\\(a,r^s)_s=1}}^{r^s} e^{\frac{2 \pi i an}{r^s}}\sum\limits_{\substack{d \mid k\\d^s\mid n+h}} \mu\left(\frac{k}{d}\right) d^s
\\& = \sum\limits_{\substack{a = 1\\(a,r^s)_s=1}}^{r^s} e^{\frac{2 \pi i an}{r^s}} \sum\limits_{\substack{d \mid k}} \mu\left(\frac{k}{d}\right) d^s \sum\limits_{\substack{n \leq N \\ d^s \mid n+h}} 1
\end{align*}
\begin{align*}
\\& = \sum\limits_{\substack{a = 1\\(a,r^s)_s=1}}^{r^s} e^{\frac{2 \pi i an}{r^s}} \sum\limits_{\substack{d \mid k}} \mu\left(\frac{k}{d}\right) d^s \sum\limits_{\substack{n+h \leq N+h \\ d^s \mid n+h}} 1
\\& = \sum\limits_{\substack{a = 1\\(a,r^s)_s=1}}^{r^s} e^{\frac{2 \pi i an}{r^s}} \sum\limits_{\substack{d \mid k}} \mu\left(\frac{k}{d}\right) d^s \left\llbracket\frac{N+h}{d^s}\right\rrbracket
\\& \leq \sum\limits_{\substack{a = 1\\(a,r^s)_s=1}}^{r^s} e^{\frac{2 \pi i an}{r^s}} \sum\limits_{\substack{d \mid k}} \mu\left(\frac{k}{d}\right) d^s \frac{N+h}{d^s}
\\ & \leq (N+h) \sum\limits_{\substack{a = 1\\(a,r^s)_s=1}}^{r^s} 1 \sum\limits_{\substack{d \mid k}} 1
\\ & \leq (N+h) \Phi_s(r^s) \tau(k)
\\ & \leq 2N \Phi_s(r^s) \tau(k).
\end{align*}
\end{proof}
The orthogonality relation allows us to write down the possible candidates for the Cohen-Ramanujan coefficients of any given arithmetical function. Indeed, if $f(n)  =  \sum\limits_{\substack{q = 1}}^{\infty}\widehat{f}(q)c_q^s(n)$ , then $f(n)c_r^s(n) = \sum\limits_{\substack{q = 1}}^{\infty}\widehat{f}(q)c_q^s(n)c_r^s(n).$
So we get,
\begin{align*}
\lim_{x^s \to \infty} \frac{1}{x^s} \sum\limits_{\substack{n \leq x^s}} f(n) c_r^s(n)&= \lim_{x^s \to \infty} \frac{1}{x^s} \sum\limits_{\substack{n \leq x^s}} \sum\limits_{\substack{q = 1}}^{\infty}\widehat{f}(q)c_q^s(n) c_r^s(n)
\\&= \sum\limits_{\substack{q = 1}}^{\infty}\widehat{f}(q) \lim_{x^s \to \infty} \frac{1}{x^s} \sum\limits_{\substack{n \leq x^s}} c_q^s(n) c_r^s(n)
\\&= \widehat{f}(r)  \lim_{x^s \to \infty} \Phi_s(r^s).
\end{align*}
Therefore, $\widehat{f}(r)= \frac{M(fc_r^s)}{\Phi_s(r^s)}$,
where by $M(f)$, we mean the limit $\lim\limits_{\substack{x \to \infty}} \frac{1}{x} \sum\limits_{\substack{n \leq x}} f(n) $.

We are now ready to prove Theorem \ref{th:fhexpansion}.
\begin{proof}[Proof of Theorem \ref{th:fhexpansion}]
Suppose that $f(n) =  \sum\limits_{\substack{q = 1}}^{\infty}\widehat{f}(q)c_q^s(n)$ .  Then
\begin{align*}
 \sum\limits_{\substack{n \leq N}}f(n+h)c_r^s(n) &= \sum\limits_{\substack{n \leq N}} \sum\limits_{\substack{q = 1}}^{\infty}\widehat{f}(q)c_q^s(n+h) c_r^s(n)
 \\&= \sum\limits_{\substack{q = 1}}^{\infty}\widehat{f}(q)\sum\limits_{\substack{n \leq N}}c_r^s(n)c_q^s(n+h)
 \\&= \sum\limits_{\substack{q \leq U}}\widehat{f}(q)\sum\limits_{\substack{n \leq N}}c_r^s(n)c_q^s(n+h)+ \sum\limits_{\substack{q > U}}\widehat{f}(q)\sum\limits_{\substack{n \leq N}}c_r^s(n)c_q^s(n+h).
\end{align*}
We can now see the first term in the above summation can be estimated to
\begin{align*}
\sum\limits_{\substack{q \leq U}}\widehat{f}(q)&\sum\limits_{\substack{n \leq N}} c_r^s(n)c_q^s(n+h) \\&=\sum\limits_{\substack{q \leq U}}\widehat{f}(q)\left[\delta_{r,q} N c_r^s(h)+O(r^sq^s\log(r^sq^s))\right] \hspace{.5cm}\text{by Lemma }\ref{lem2}
\\&= Nc_r^s(h)+O(r^sU^s\log (r^sU^s)).
\end{align*}
and the second term can be estimated to
\begin{align*}
\sum\limits_{\substack{q > U}}\widehat{f}(q)\sum\limits_{\substack{n \leq N}}c_r^s(n)c_q^s(n+h)& \leq 2N \Phi_s(r^s) \sum\limits_{\substack{q > U}}\widehat{f}(q)\tau(q) \hspace{.5cm}{\text{by Lemma \ref{lem4}}}.
\end{align*}
Since $ \sum\limits_{\substack{q }} \vert \widehat{f}(q) \vert \tau(q) < \infty$, $\sum\limits_{\substack{q > U}}\widehat{f}(q)\sum\limits_{\substack{n \leq N}}c_r^s(n)c_q^s(n+h)=O(N).$
Therefore $ \sum\limits_{\substack{n \leq N}}f(n+h)c_r^s(n) = N\widehat{f}(r)c_r^s(h)+O(r^sU^s\log (r^sU^s))+O(N)$. Now
\begin{align*}
M(f_hc_r^s) &= \lim_{N \to \infty} \frac{1}{N} \sum\limits_{\substack{n \leq N}}f_h(n)c_r^s(n)
\\&=  \lim_{N \to \infty} \frac{1}{N} \sum\limits_{\substack{n \leq N}}f(n+h)c_r^s(n)
\\&=  \lim_{N \to \infty} \frac{1}{N} \left[N \widehat{f}(r)c_r^s(h)+O(r^sU^s\log (r^sU^s))+O(N)\right]
\\& = \widehat{f}(r)c_r^s(h).
\end{align*}
The orthogonality relation now gives
\begin{align*}
\widehat{f}_h(r) = \frac{M(f_hc_r^s)}{\Phi_s(r^s)} = \frac{\widehat{f}(r)c_r^s(h)}{\Phi_s(r^s)}.
\end{align*}
\end{proof}

\end{document}